\numberwithin{equation}{section}
\newcommand{\bbone}{\text{\usefont{U}{bbold}{m}{n}1}}
\newcommand{\R}{\mathbb R}
\newcommand{\C}{\mathbb C}
\newcommand{\N}{\mathbb N}
\newcommand{\D}{\mathrm{D}}
\DeclareMathOperator{\Curl}{Curl}
\DeclareMathOperator{\Div}{Div}
\newcommand{\dev}{\operatorname{dev}}
\newcommand{\sym}{\operatorname{sym}}
\renewcommand{\skew}{\operatorname{skew}}
\DeclareMathOperator{\tr}{tr}
\newcommand{\inc}{\boldsymbol{\operatorname{inc}}\,}
\newcommand{\lebe}{\operatorname{L}}
\newcommand{\sobo}{\operatorname{W}}
\newcommand{\hold}{\operatorname{C}}
\newcommand{\besov}{\operatorname{B}}
\newcommand{\norm}[1]{\left\lVert#1\right\rVert}
\newcommand{\skalarProd}[2]{\big\langle#1,#2\big\rangle}
\newcommand{\abs}[1]{\lvert#1\rvert}
\newcommand{\Anti}{\operatorname{Anti}}
\newcommand{\E}{\mathbf{E}}
\theoremstyle{plain}
\newtheorem{theorem}{Theorem}[section]
\newtheorem{lemma}[theorem]{Lemma}
\newtheorem{corollary}[theorem]{Corollary}
\theoremstyle{remark}
\newtheorem{remark}[theorem]{Remark}
\newtheorem{example}[theorem]{Example}
\begin{document}
\numberwithin{equation}{section}

\colorlet{RED}{red}
\title[Generalised $\lebe^1$-KMS Inequalities]{Limiting Korn-Maxwell-Sobolev inequalities\\ for general incompatibilities
}
\author[F. Gmeineder]{Franz Gmeineder}
\address[Franz Gmeineder]{Department of Mathematics and Statistics, University of Konstanz, Universit\"{a}tsstrasse 10, 78457 Konstanz, Germany.}
\email{franz.gmeineder@uni-konstanz.de}
\author[P. Lewintan]{Peter Lewintan}
\address[Peter Lewintan]{Karlsruhe Institute of Technology, Englerstrasse 2, 76131 Karlsruhe, Germany}
\email{peter.lewintan@kit.edu}
\author[J. Van Schaftingen]{Jean Van Schaftingen}
\address[Jean Van Schaftingen]{Universit\'{e} catholique de Louvain, Institut de Recherche en Math\'{e}matique et Physique, Chemin du Cyclotron 2 bte L7.01.02, 1348 Louvain-la-Neuve, Belgium. }
\email{jean.vanschaftingen@uclouvain.be}

\date\today
\keywords{Korn's inequality, Sobolev inequalities, incompatible tensor fields, limiting $\lebe^{1}$-estimates}
\subjclass[2020]{35A23, 26D10, 35Q74/35Q75, 46E35}
\maketitle
\begin{abstract} 
We give sharp conditions for the limiting Korn-Maxwell-Sobolev inequalities 
\begin{align*}
  \norm{P}_{{\dot{\sobo}}{^{k-1,\frac{n}{n-1}}}(\R^n)}\le c\big(\norm{\mathscr{A}[P]}_{{\dot{\sobo}}{^{k-1,\frac{n}{n-1}}}(\R^n)}+\norm{\mathbb{B}P}_{\lebe^{1}(\R^n)}\big)
 \end{align*}
to hold for all $P\in\hold_{c}^{\infty}(\R^{n};V)$, where $\mathscr{A}$ is a linear map between finite dimensional vector spaces and $\mathbb{B}$ is a $k$-th order, linear and homogeneous constant-coefficient differential operator. By the appearance of the $\lebe^{1}$-norm of the differential expression $\mathbb{B}P$ on the right-hand side, such inequalities generalise previously known estimates to the borderline case $p=1$, and thereby answer an open problem due to \textsc{M\"{u}ller, Neff} and the second author (Calc\@. Var\@. PDE, 2021) in the affirmative.

\end{abstract}
\section{Introduction}\label{sec:introduction}
\subsection{KMS-type inequalities}
Korn-type inequalities are a central tool in (non-)linear elasticity or fluid mechanics; see, e.g. \cite{Friedrichs,Gobert,Korn}. In their easiest form, they allow to control the full gradient of some $u\in\hold_{c}^{\infty}(\R^{n};\R^{n})$ by its symmetric or trace-free symmetric parts, respectively: For each $n\in\mathbb{N}$ and $1<q<\infty$, there exists a constant $c=c(n,q)>0$ such that 
\begin{align}\label{eq:KornIntro1}
\begin{split}
&\norm{\D u}_{\lebe^{q}(\R^{n})}\leq c\norm{\sym\D u}_{\lebe^{q}(\R^{n})}, \\ 
&\norm{\D u}_{\lebe^{q}(\R^{n})} \leq c\norm{\sym \D u - \tfrac{1}{n}\mathrm{div}(u)\, \bbone_{n}}_{\lebe^{q}(\R^{n})}
\end{split}
\end{align}
hold for all $u\in\hold_{c}^{\infty}(\R^{n};\R^{n})$, where $\bbone_{n}$ is the $(n\times n)$-unit matrix. Inequalities of the form \eqref{eq:KornIntro1} are non-trivial because the non-differential counterparts 
\begin{align}\label{eq:KornIntro2}
\begin{split}
&\norm{P}_{\lebe^{q}(\R^{n})}\leq c\norm{\sym P}_{\lebe^{q}(\R^{n})},\;\;\;\norm{P}_{\lebe^{q}(\R^{n})} \leq c\norm{\sym P - \tfrac{1}{n}\tr(P)\,\bbone_{n}}_{\lebe^{q}(\R^{n})}
\end{split}
\end{align}
for general fields $P\in\hold_{c}^{\infty}(\R^{n};\R^{n\times n})$ are trivially seen to be false; take, e.g., fields $P$ with values in the skew-symmetric matrices with zero trace. In view of the failure of \eqref{eq:KornIntro2}, it is crucial for \eqref{eq:KornIntro1} that we deal with \emph{gradients} here. In this case, it is now well-known that inequalities \eqref{eq:KornIntro1} are a consequence of Calder\'{o}n-Zygmund estimates, and thus they are bound to only hold in the regime $1<q<\infty$. 

We may rephrase this discussion by saying that despite the failure of  \eqref{eq:KornIntro1} for general matrix fields $P$, it does hold indeed for $\Curl $-free matrix fields. One might thus hope to modify \eqref{eq:KornIntro2} in a way such that \eqref{eq:KornIntro1} is retrieved for $\Curl $-free fields, meaning that we ask for an inequality of the form 
\begin{align}\label{eq:KMSMain1}
\norm{P}_{\lebe^{q}(\R^{n})}\leq c\Big(\norm{\mathscr{A}[P]}_{\lebe^{q}(\R^{n})} + \norm{\Curl P}_{\lebe^{p}(\R^{n})}\Big),\qquad P\in\hold_{c}^{\infty}(\R^{n};\R^{n\times n}), 
\end{align}
where $\mathscr{A}[P]=\sym P$ or $\mathscr{A}[P]=\sym P - \frac{1}{n}\tr(P)\bbone_{n}$, respectively. Note that, if $1\leq p<n$ is fixed, then scaling uniquely determines $q$ as $q=\frac{np}{n-p}$. For the choices of $\mathscr{A}$ displayed above and depending on $n$ and $1\leq p<n$, inequality \eqref{eq:KMSMain1} has been established in \cite{ContiGarroni,GLP,GLN1,GmSp}, and we refer the reader to the subsequent section for a discussion of the underlying strategies of proof.

Numerous applications, e.g. from continuum mechanics (cf. \cite{Arnold,BauerNeffPaulyStarke,Nerf,NeffPaulyWitsch,LMN}), require to go beyond \eqref{eq:KMSMain1} in two different directions. First, one strives for more general part maps $\mathscr{A}$ than the (trace-free) symmetric parts. Second, one aims for more general incompatibilities than given by the curl of a matrix field. Such generalisations prove particularly relevant e.g. in the study of tensor fields with conformally invariant dislocation energies \cite{LMN} or the relaxed micromorphic model \cite{Nerf}. In this vein, one aims for sharp conditions on the interplay between the part maps, incompatibilities and integrabilities $p$ which make inequalities \eqref{eq:KMSMain1} work. As we shall discuss now, the only remaining case which could not be treated successfully so far is the case $p=1$, and the aim of the present paper is to close this gap.

\subsection{Context and main results}
Let $1\leq p <n$. Based on our above discussion, the general form of the requisite inequalities reads as
 \begin{align}\label{eq:KMS-I}
  \norm{P}_{{\dot{\sobo}}{^{k-1,q}}(\R^n)}\le c\left(\norm{\mathscr{A}[P]}_{{\dot{\sobo}}{^{k-1,q}}(\R^n)}+\norm{\mathbb{B}P}_{\lebe^p(\R^n)}\right), \qquad  P\in\hold^\infty_c(\R^n;V), \tag{KMS}
 \end{align}
where $V,\widetilde V, W$ are finite dimensional vector spaces, $\mathscr{A}: V\to\widetilde V$ is a linear map and $\mathbb{B}$ is a linear, homogeneous, $k$-th order constant coefficient differential operator on $\R^n$ from $V$ to $W$. In the sequel, we shall refer to \eqref{eq:KMS-I} as \emph{Korn-Maxwell-Sobolev inequalities}, and we recall that scaling determines $q=p^*=\frac{np}{n-p}$ as the Sobolev exponent of $1\leq p<n$. In order to single out the borderline case to be addressed in this paper, we briefly summarise the available results concerning inequalities \eqref{eq:KMS-I}. Here we make use of the Fourier symbol terminology which, for the reader's convenience, is displayed in Section \ref{sec:notation} below.

\emph{The case $1<p<n$.} In this case, there is a vast literature on various constellations of $\mathscr{A},\mathbb{B}$ and $p$, see e.g. the references in \cite{GLN2,LMN}. If $1<p<n$ and $\mathbb{B}=\Curl $, \eqref{eq:KMSMain1} can be established by performing a Helmholtz decomposition on $P$ and subsequently estimating the divergence-free part by use of the fractional integration theorem and the curl-free part by virtue of the usual Korn-type inequalities, cf. \cite{GLN1,GmSp}. This approach is difficult to be implemented in the case of general incompatibility operators $\mathbb{B}$. Based on the so-called \emph{algebraic split} approach, a complete characterisation of the interplay between the part map $\mathscr{A}$ and the differential operator $\mathbb{B}$ in Korn-Maxwell-Sobolev inequalities has been given in the precursor \cite{GLN2}. The outcome is
\begin{align}\label{eq:1<p<n}
\eqref{eq:KMS-I}\;\text{holds for}\;1<p<n \Longleftrightarrow \bigcup_{\xi\neq 0}\ker\mathscr{A}\cap\ker\mathbb{B}[\xi]=\{0\}, 
\end{align}
which extends and unifies several previous results, cf. \cite{BauerNeffPaulyStarke,LMN,Lewintan3,Lewintan4,NeffPlastic,NeffPaulyWitsch}. 
In Fourier analytic terms, the right-hand side condition of \eqref{eq:1<p<n} means that $\mathbb{B}$ behaves like an elliptic operator on $\ker\mathscr{A}$ (see Section \ref{sec:definitions} for this notion). Hence, \eqref{eq:KMS-I} embodies two principles: First, applying \eqref{eq:1<p<n} to fields $P\in\hold_{c}^{\infty}(\R^{n};\ker\mathscr{A})$, gives us the Sobolev-type inequality 
\begin{align}\label{eq:Sobolevintro}
\norm{P}_{{\dot{\sobo}}{^{k-1,\frac{np}{n-p}}}(\R^n)}\leq c\norm{\mathbb{B}P}_{\lebe^{p}(\R^{n})}. 
\end{align}
Second, if $k=1$ and $\mathbb{B}=\Curl$, the nullspace of $\mathbb{B}$ precisely consist of gradient fields. Thus, applying \eqref{eq:KMS-I} to fields $P=\D u$, we then retrieve the Korn-type inequality 
\begin{align}\label{eq:Kornintro}
\norm{\D u}_{\lebe^{\frac{np}{n-p}}(\R^n)}\leq c\norm{\mathscr{A}[\D u]}_{\lebe^{\frac{np}{n-p}}(\R^n)}. 
\end{align}
In this sense, \eqref{eq:KMS-I} can be seen as a common gateway to both Korn and Sobolev inequalities.

\emph{The case $p=1$.} On the contrary, only few results, namely \cite{ContiGarroni,GLP,GLN1,GmSp}, address the borderline case $p=1$ (whereby $q=1^{*}\coloneqq\frac{n}{n-1}$) and only in the particular situation of the differential operator $\mathbb{B}=\Curl$ being the matrix curl. Specifically, in this case the validity of \eqref{eq:KMS-I} is equivalent to 
\begin{itemize}
 \item $\R$-ellipticity of $\mathbb{A}u\coloneqq\mathscr{A}[\D u]$ in $n\geq3$ dimensions (cf. \cite{GmSp,GLN1}),
 \item $\C$-ellipticity of $\mathbb{A}u\coloneqq\mathscr{A}[\D u]$ in $n=2$  dimensions (cf. \cite{GLN1}).
\end{itemize}
In essence, the stronger condition of $\mathbb{C}$-ellipticity compensates the weaker properties of the operator $\Curl$ in two dimensions. The methods of \cite{GmSp,GLN1}, however, are very specific to the $\Curl $-operator, and do not allow for an ad-hoc generalisation to the case of general part maps $\mathscr{A}$ and operators $\mathbb{B}$. In particular, it is far from clear how they admit conclusive statements on the validity of \eqref{eq:KMS-I} when $\Curl $ is replaced by e.g.  $\mathbb{B}=\dev\sym\Curl$. Even for this operator, which takes a prominent role in gradient plasticity with plastic spin or incompatible elasticity (cf. \textsc{M\"{u}ller} et al. \cite{LMN}), inequalities \eqref{eq:KMS-I} thus have remained an open problem. 

In the borderline case $p=1$, inequalities of the form \eqref{eq:KMS-I} still imply Korn and Sobolev inequalities \eqref{eq:Sobolevintro}, \eqref{eq:Kornintro}. Specifically, going to \eqref{eq:Sobolevintro}, it is clear that $\mathbb{B}$ must match the conditions which make limiting Sobolev-type estimates work for $p=1$ -- at least, when acting on fields $P\in\hold_{c}^{\infty}(\R^{n};\ker\mathscr{A})$. These conditions, due to the third author \cite{VS} (also see \textsc{Bourgain \& Brezis} \cite{BB}), require $\mathbb{B}$ to behave like an elliptic and \emph{cancelling} operator when restricted to fields $P\in\hold_{c}^{\infty}(\R^{n};\ker\mathscr{A})$. Note that, by the failure of Calder\'{o}n-Zygmund  estimates for $p=1$ (cf. \textsc{Ornstein} \cite{Ornstein}), \eqref{eq:Sobolevintro} cannot be derived from the ellipticity assumption on the right-hand side of \eqref{eq:1<p<n} alone. The sharp conditions for \eqref{eq:KMS-I} to hold thus must incorporate an additional cancellation-type condition. The necessity of such an additional condition can directly be seen by the following explicit
\begin{example}\label{ex:noninequality}
In $n=3$ dimensions consider the part map $\mathscr{A}=\sym:\R^{3\times3}\to\R^{3\times3}_{\sym}$, $X\mapsto\frac12(X+X^\top)$ and the differential operator $\mathbb{B}:\hold^\infty_c(\R^3;\R^{3\times3})\to\hold^\infty_c(\R^3;\R^{3\times3})$ given by $\mathbb{B}P\coloneqq \skew\Curl P + \tr \Curl P\cdot\bbone_3$. In the precursor \cite{GLN2} we have shown that for all  $P\in\hold^\infty_c(\R^3;\R^{3\times3})$ and all $1<p<3$ it holds
 \begin{align}\label{eq:symskewtr}
  \norm{P}_{\lebe^{p^*}(\R^3)}\le c\left(\norm{\sym P}_{\lebe^{p^*}(\R^3)}+\norm{\skew\Curl P}_{\lebe^p(\R^3)}+ \norm{\tr\Curl P}_{\lebe^p(\R^3)}\right).
 \end{align}
However, the following example shows that this estimate does not persist in the borderline case $p=1$. To this end, let us consider for $\varphi\in\hold^\infty_c(\R^3)$ the matrix field
\begin{subequations}
\begin{equation}
 P_{\varphi}=\begin{pmatrix} 0 & -\partial_3\varphi & \partial_2\varphi \\ \partial_3\varphi & 0 & -\partial_1\varphi \\ -\partial_2\varphi & \partial_1\varphi & 0 \end{pmatrix}.
\end{equation}
Then we have
\begin{equation}
 \Curl P_\varphi = \Delta\varphi\cdot\bbone_3-\D\nabla\varphi \in\R^{3\times3}_{\sym},
\end{equation}
so that
\begin{equation}
 \mathscr{A}[P_{\varphi}]=0 \quad \text{and}\quad \mathbb{B}P_{\varphi}=2\Delta\varphi\cdot\bbone_3.
\end{equation}
\end{subequations}
If \eqref{eq:symskewtr} were correct in the borderline case $p=1$ we would have
\begin{align*}
 \norm{\nabla\varphi}_{\lebe^{\frac32}(\R^3)} &= c\norm{P_{\varphi}}_{\lebe^{\frac32}(\R^3)}\overset{!}{\le} c\left(\norm{\mathscr{A}[P_\varphi]}_{\lebe^{\frac32}(\R^3)}+\norm{\mathbb{B}P_\varphi}_{\lebe^1(\R^3)}\right)=c\norm{\Delta\varphi}_{\lebe^1(\R^3)},
\end{align*}
and this inequality is easily seen to be false by taking regularisations and smooth cut-offs of the fundamental solution $\Phi(\cdot)=\frac{1}{3\omega_3}\frac{1}{\abs{\cdot}}$ of the Laplacian $(-\Delta)$; here, $\omega_{3}\coloneqq\mathscr{L}^{3}(\mathrm{B}_{1}(0))$.  
\end{example}
As our main result, to be stated as Theorem \ref{thm:L1KMS} in Section \ref{sec:main} below, the previous example can be classified within the \emph{sharp} conditions which make inequalities \eqref{eq:KMS-I} work. Namely, we have that \eqref{eq:KMS-I} holds for $p=1$ if and only if 
 \begin{align}\label{eq:KMSsharpconds}
 \bigcup_{\xi\in\R^{n}\setminus\{0\}} \ker\mathscr{A}\cap \ker\mathbb{B}[\xi] =\{0\} \;\;\;\text{and}\;\;\;
   \bigcap_{\xi\in\R^{n}\setminus\{0\}}\mathbb{B}[\xi](\ker\mathscr{A})=\{0\}
\end{align}
The second part of \eqref{eq:KMSsharpconds} is in general substantially weaker than the full cancellation condition $\bigcap_{\xi\neq 0}\mathbb{B}[\xi](V)=\{0\}$. Specifically, the operator $\dev\sym\Curl$ serves as an example of an operator which is \emph{not cancelling} but satisfies \eqref{eq:KMSsharpconds} for basically all choices of part maps $\mathscr{A}$ which are relevant in applications; see Section \ref{sec:examples} for this and more examples. 
\subsection{Structure of the paper}
Besides this introduction, the paper is organised as follows: In Section \ref{sec:notation} we fix notation and collect the requisite background terminology and facts on differential operators as required in the sequel. We then state and prove our main Theorem \ref{thm:L1KMS} as well as variants for other function spaces, and discuss their relations to strong \textsc{Bourgain-Brezis} estimates in Section \ref{sec:main}. The paper is then concluded in Section \ref{sec:examples} by examples completing the picture of available limiting KMS-inequalities. Whereas we focus on inequalities on full space, our results still allow to provide an affirmative answer to the borderline case of the $\dev\sym\Curl$-operator left open in  \cite[Thm.\@ 3.5, Rem.\@ 3.6]{LMN} for \emph{globally vanishing traces} -- see Corollary \ref{cor:affirmative} and Example \ref{ex:devsymCurl}.

\section{Notation and preliminaries}\label{sec:notation}
\subsection{General notation} We will denote by $\skalarProd{\cdot}{\cdot}$ the inner product of a real finite dimensional space. For a square matrix $X\in\R^{n\times n}$ we consider the following algebraic parts: the transpose $X^\top$, the trace $\tr X\coloneqq \skalarProd{X}{\bbone_n}$, the deviatoric or trace-free part $\dev X \coloneqq X-\frac{\tr X}{n}\,\bbone_n$, the symmetric part $\sym X \coloneqq \frac12(X+X^\top)$ and the skew-symmetric part $\skew X \coloneqq\frac12(X-X^\top)$.

Recall that the matrix $\Curl$ is the row-by-row application of the vectorial curl, and therefore $(\Curl P)_{ijk}=\partial_{i}P_{kj}-\partial_{j}P_{ki}$ for $(m\times n)$-valued maps $P$.  In order to study generalised incompatibilities we recall some general terminology for vectorial differential operators. 

\subsection{Differential operators}\label{sec:definitions}
 Given $n,\ell\in\N$ and a finite dimensional real vector space $V$,  consider a homogeneous, linear, constant coefficient differential operator  $\mathbb{A}$ of order $\ell$ on $\R^n$ from $V$ to another finite dimensional vector space $W$. This means that we have a representation
 \begin{subequations}\label{eq:diffoperator}
\begin{align}
 \mathbb{A}v\coloneqq\sum_{\abs{\alpha}=\ell}\mathbb{A}_{\alpha}\partial^\alpha v, \quad v:\R^n\to V,
\end{align}
with linear maps $\mathbb{A}_\alpha:V\to W$ and multi-indices $\alpha\in\N^n_0$, $|\alpha|=\ell$. The corresponding \emph{symbol map} reads
\begin{align}
 \mathbb{A}[\xi]:V\to W, \quad \mathbb{A}[\xi]\boldsymbol{v}\coloneqq\sum_{\abs{\alpha}=\ell}\xi^\alpha\mathbb{A}_{\alpha}\boldsymbol{v}, \quad \xi\in\R^n, \boldsymbol{v}\in V,
\end{align}
\end{subequations}
where $\xi^\alpha\coloneqq\xi_1^{\alpha_1}\cdots\xi_n^{\alpha_n}$ for $\alpha=(\alpha_1 \, \cdots \, \alpha_n )^\top\in\N^n_0$. The operator $\mathbb{A}$ is then called
\begin{itemize}
\item ($\R$-)\emph{elliptic} if for every $\xi\in\R^n\backslash\{0\}$ the associated symbol map $\mathbb{A}[\xi]$ is injective, i.e.,
\begin{equation}
 \ker_{\R}\mathbb{A}[\xi]=\{0\} \quad \text{for all } \xi\in\R^n\backslash\{0\}.
\end{equation}
\item \emph{$\C$-elliptic} if for every $\xi\in\C^n\setminus\{0\}$ the complex symbol map $\mathbb{A}[\xi]:V+\mathrm{i}V\to W+ \mathrm{i}W$ is injective.
\item \emph{cancelling} if 
\begin{equation}
 \bigcap_{\xi\in\R^n\backslash\{0\}} \mathbb{A}[\xi](V) = \{0\}.
\end{equation}
\end{itemize}
We now gather some basic examples; more elaborate ones, which also show the interplay between the above notions and the dimension $n\in\mathbb{N}$ can be found in Section \ref{sec:examples}: 

Classical $\C$-elliptic first order differential operators are the gradient, the deviatoric gradient and the symmetric gradient. The deviatoric symmetric gradient is precisely $\C$-elliptic in dimensions $n\ge3$ and in $n=2$ dimensions only $\R$-elliptic, cf. \cite[\S 2.2]{BDG}. The curl or the divergence operator are not elliptic.

The classical curl and the generalised curl in dimensions $n\ge3$ are cancelling operators. The generalised curl in $2$ dimensions corresponds to the divergence and is not cancelling. The divergence is also not cancelling in any dimension. The corresponding matrix differential operators (which act row-wise) possess the same properties, i.e., the matrix $\Curl$ is cancelling. 

It is precisely the additional cancellation property which is required in limiting Sobolev-type inequalities on full space, and which we recall for the reader's convenience:
\begin{lemma}[{\cite[Thm.\@ 1.3]{VS}}]
Let $\mathbb{A}$ be an operator of the form \eqref{eq:diffoperator}. Then one has the estimate 
\begin{align}\label{eq:Sobolev}
\norm{\D^{\ell-1}u}_{\lebe^{\frac{n}{n-1}}(\R^{n})}\leq c\norm{\mathbb{A}u}_{\lebe^{1}(\R^{n})},\qquad u\in\hold_{c}^{\infty}(\R^{n};V), 
\end{align}
if and only if $\mathbb{A}$ is elliptic and cancelling. 
\end{lemma}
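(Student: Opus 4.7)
The plan is to establish the two implications separately; necessity reduces to explicit counterexamples, whereas sufficiency hinges on a duality argument in which the cancellation hypothesis is precisely what rescues the endpoint $p=1$.

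\textbf{Necessity.} Suppose first that $\mathbb{A}[\xi_0]\boldsymbol{v}_0 = 0$ for some $\xi_0\in\R^n\setminus\{0\}$ and $\boldsymbol{v}_0\in V\setminus\{0\}$. I would test \eqref{eq:Sobolev} against the modulated cut-offs $w_{\lambda,R}(x)=\eta(x/R)\cos(\lambda\,\xi_0\cdot x)\boldsymbol{v}_0$ with $\eta\in\hold^{\infty}_{c}(\R^n)$ non-trivial, $\lambda>0$ fixed and $R\to\infty$; the vanishing $\mathbb{A}[\xi_0]\boldsymbol{v}_0=0$ kills the leading $\lambda^{\ell}$-term in $\mathbb{A}w_{\lambda,R}$, so $\norm{\mathbb{A}w_{\lambda,R}}_{\lebe^{1}}=O(R^{n-2})$ whereas $\norm{\D^{\ell-1}w_{\lambda,R}}_{\lebe^{n/(n-1)}}\gtrsim R^{n-1}$, and the ratio blows up. For cancellation, if $\boldsymbol{w}_0\in\bigcap_{\xi\neq 0}\mathbb{A}[\xi](V)\setminus\{0\}$, ellipticity permits the construction of a tempered fundamental solution $u_0$ of $\mathbb{A}u_0=\boldsymbol{w}_0\,\delta_0$ with Fourier symbol $(\mathbb{A}[\mathrm{i}\xi])^{+}\boldsymbol{w}_0$, homogeneous of degree $-\ell$. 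Consequently $\D^{\ell-1}u_0(x)$ decays as $\abs{x}^{1-n}$, which is not in $\lebe^{n/(n-1)}(\R^n)$; mollifying $\delta_0$ and truncating at large scales produces an $\hold^{\infty}_{c}$-family violating \eqref{eq:Sobolev}.

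\textbf{Sufficiency via duality.} By density of $\hold^{\infty}_{c}$ and the duality $\lebe^{n/(n-1)}=(\lebe^{n})^{*}$, \eqref{eq:Sobolev} is equivalent to showing that, for every $\varphi\in\hold^{\infty}_{c}(\R^n)$ of the tensor type dual to $\D^{\ell-1}u$ with $\norm{\varphi}_{\lebe^{n}}\le 1$, one has
\begin{equation*}
\Bigl\lvert\int_{\R^n}\D^{\ell-1}u\cdot\varphi\,\mathrm{d}x\Bigr\rvert\le c\,\norm{\mathbb{A}u}_{\lebe^{1}(\R^n)}.
\end{equation*}
The plan is to construct an auxiliary potential $\psi\in\lebe^{\infty}(\R^n;W^{*})$ satisfying $\mathbb{A}^{*}\psi=(-1)^{\ell-1}\D^{\ell-1}\varphi$ and $\norm{\psi}_{\lebe^{\infty}}\le c$. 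Two successive integrations by parts\,---\,first moving $\D^{\ell-1}$ from $u$ onto $\varphi$, then $\mathbb{A}^{*}$ from $\psi$ back onto $u$\,---\,collapse the left-hand side into $\int_{\R^n}\mathbb{A}u\cdot\psi\,\mathrm{d}x$, after which H\"{o}lder yields the claim.

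\textbf{Main obstacle.} The construction of $\psi$ with $\lebe^{\infty}$-control is the heart of the matter. Ellipticity ensures that $\mathbb{A}^{*}[\mathrm{i}\xi]$ is surjective away from the origin, so $\psi$ may be defined at the Fourier level by
\begin{equation*}
\widehat\psi(\xi)=\bigl(\mathbb{A}^{*}[\mathrm{i}\xi]\bigr)^{+}(\mathrm{i}\xi)^{\otimes(\ell-1)}\widehat\varphi(\xi),
\end{equation*}
a Fourier multiplier of homogeneity $-1$. Classical Calder\'{o}n--Zygmund and Riesz-potential theory only provide a $\mathrm{BMO}$-bound on $\psi$ from $\varphi\in\lebe^{n}$, \emph{not} the $\lebe^{\infty}$-bound needed. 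The cancellation condition $\bigcap_{\xi\neq 0}\mathbb{A}[\xi](V)=\{0\}$ is equivalent, by duality of subspaces, to $\sum_{\xi\neq 0}\ker\mathbb{A}^{*}[\xi]=W^{*}$; this extra gauge freedom\,---\,exploited on each dyadic frequency annulus by choosing representatives of the pseudo-inverse modulo $\ker\mathbb{A}^{*}[\xi]$, then reassembled via a Littlewood--Paley synthesis and the strong \textsc{Bourgain} \& \textsc{Brezis} endpoint estimate\,---\,upgrades the BMO-bound to $\lebe^{\infty}$. This BMO-to-$\lebe^{\infty}$ upgrade is the genuinely new input beyond classical harmonic analysis, and is the only step in the argument in which the cancellation hypothesis is indispensable.
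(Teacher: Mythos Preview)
The paper does not prove this lemma; it is quoted verbatim from \cite[Thm.~1.3]{VS} and used as a black box. So there is no ``paper's own proof'' to compare against, and your proposal has to be assessed on its own merits.

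\textbf{The necessity-of-ellipticity argument is flawed.} With $w_{\lambda,R}(x)=\eta(x/R)\cos(\lambda\,\xi_{0}\cdot x)\boldsymbol{v}_{0}$, the vanishing of the top-order symbol does kill the $\lambda^{\ell}$-contribution, but the next term (one derivative on $\eta(\cdot/R)$, $\ell-1$ on the cosine) contributes $R^{-1}\lambda^{\ell-1}$ pointwise on a set of volume $\sim R^{n}$, hence $\norm{\mathbb{A}w_{\lambda,R}}_{\lebe^{1}}\sim\lambda^{\ell-1}R^{\,n-1}$, \emph{not} $O(R^{\,n-2})$. Since $\norm{\D^{\ell-1}w_{\lambda,R}}_{\lebe^{n/(n-1)}}\sim\lambda^{\ell-1}R^{\,n-1}$ as well, the ratio stays bounded and no contradiction ensues. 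Sending $\lambda\to\infty$ instead of $R$ does not help either: both sides scale like $\lambda^{\ell-1}$. The necessity of ellipticity in \cite{VS} is obtained by a different route (roughly, localising to a half-space and exploiting that non-ellipticity produces nontrivial compactly supported kernel elements after a limiting procedure), and your plane-wave ansatz would need a genuinely new idea to work.

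\textbf{The sufficiency sketch is in the right spirit but mischaracterises the key step.} Your duality reduction and the identification of cancellation with $\sum_{\xi\neq 0}\ker\mathbb{A}^{*}[\xi]=W^{*}$ are correct. However, the proof in \cite{VS} does not ``upgrade BMO to $\lebe^{\infty}$'' via Littlewood--Paley gauge choices. It proceeds by showing that ellipticity plus cancellation yields a cocancelling annihilator $\mathbb{L}$ with $\mathbb{L}\mathbb{A}=0$, and then proves directly (via a one-dimensional slicing and integration argument, Proposition~2.1 in \cite{VS}) that $\bigl|\int\langle f,\varphi\rangle\bigr|\le c\,\norm{f}_{\lebe^{1}}\norm{\D\varphi}_{\lebe^{n}}$ whenever $\mathbb{L}f=0$. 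Applying this to $f=\mathbb{A}u$ gives the claim. Your ``BMO-to-$\lebe^{\infty}$'' narrative is a plausible heuristic for why the estimate \emph{could} hold, but it is not the actual mechanism, and as stated it is too vague to constitute a proof.

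The necessity-of-cancellation part of your argument (fundamental solution with $\D^{\ell-1}u_{0}\sim|x|^{1-n}\notin\lebe^{n/(n-1)}$, then mollify and cut off) is correct and matches the approach in \cite{VS}.
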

Compared with ellipticity and cancellation, the notion of $\mathbb{C}$-ellipticity usually appears when aiming for sharp conditions for boundary estimates \cite{BDG,GmRaVa1} rather than for estimates on full space. However, since an operator is $\mathbb{C}$-elliptic if and only if its nullspace in the space of distributions $\mathscr{D}'(\R^{n};V)$ is finite-dimensional, it is usually easier to decide whether an operator is $\mathbb{C}$-elliptic than elliptic and cancelling. The following observation connects $\C$-ellipticity and ellipticity and cancellation in all dimensions:
\begin{lemma}[\cite{GmRa1,GmRaVa1}]\label{lem:Celliptic}
The $\C$-ellipticity of an operator of the form \eqref{eq:diffoperator}  implies both its ellipticity and cancellation. For $n=2$ also the converse implication holds true for first order operators. 
\end{lemma}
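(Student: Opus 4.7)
The plan splits into the forward implication (where ellipticity is trivial and cancellation follows from an analytic continuation argument) and the converse in $n=2$ for first-order operators (which one establishes by exhibiting an explicit counterexample to cancellation).

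\emph{Forward direction: $\C$-elliptic implies elliptic and cancelling.} Ellipticity is immediate from the inclusion $\R^n\setminus\{0\}\subset\C^n\setminus\{0\}$. For cancellation, fix $w\in\bigcap_{\xi\in\R^n\setminus\{0\}}\mathbb{A}[\xi](V)$. On the locus where $\mathbb{A}[\zeta]$ has full column rank $\dim V$ -- which by $\C$-ellipticity is all of $\C^n\setminus\{0\}$ -- membership $w\in\mathbb{A}[\zeta](V+\mathrm{i}V)$ is equivalent to the vanishing of the $(\dim V+1)$-minors of the augmented matrix $[\mathbb{A}[\zeta]\mid w]$, and these minors are polynomials in $\zeta$. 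Since this algebraic identity holds on $\R^n\setminus\{0\}$, a Zariski dense subset of $\C^n$, the minors vanish identically, and hence $w\in\mathbb{A}[\zeta](V+\mathrm{i}V)$ for every $\zeta\in\C^n\setminus\{0\}$. The unique preimage $v(\zeta)\in V+\mathrm{i}V$ is holomorphic on $\C^n\setminus\{0\}$ (by Cramer's rule applied to any full-rank $(\dim V\times\dim V)$-submatrix of $\mathbb{A}[\zeta]$). For $n\ge 2$, Hartogs's extension theorem extends $v$ holomorphically across the origin, and passing to the limit $\zeta\to 0$ in the identity $\mathbb{A}[\zeta]v(\zeta)=w$ yields $w=\mathbb{A}[0]v(0)=0$.

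\emph{Converse for $n=2$, $\ell=1$.} Argue by contrapositive. If $\mathbb{A}$ is not $\C$-elliptic, pick $\xi_0=a+\mathrm{i}b\in\C^2\setminus\{0\}$ and $v_0=\alpha+\mathrm{i}\beta\in(V+\mathrm{i}V)\setminus\{0\}$ with $\mathbb{A}[\xi_0]v_0=0$. $\R$-ellipticity forces $a$ and $b$ to be linearly independent in $\R^2$: otherwise $\xi_0$ would be a nonzero complex scalar multiple of a real vector $\eta$, which by linearity ($\ell=1$) and decomposition of $v_0$ into real and imaginary parts would force $v_0=0$. Hence $\{a,b\}$ is a basis of $\R^2$. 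Splitting $\mathbb{A}[a+\mathrm{i}b](\alpha+\mathrm{i}\beta)=0$ into real and imaginary parts gives
\begin{equation*}
\mathbb{A}[a]\alpha=\mathbb{A}[b]\beta,\qquad \mathbb{A}[a]\beta=-\mathbb{A}[b]\alpha.
\end{equation*}
Set $w:=\mathbb{A}[a]\alpha$. A direct verification using the two identities above shows that, for every $\xi=sa+tb\in\R^2\setminus\{0\}$,
\begin{equation*}
\mathbb{A}[sa+tb]\Bigl(\tfrac{s\alpha+t\beta}{s^2+t^2}\Bigr)=w,
\end{equation*}
while $\R$-ellipticity combined with $v_0\neq 0$ ensures $w\neq 0$ (otherwise $\mathbb{A}[a]\alpha=\mathbb{A}[b]\beta=0$ would imply $\alpha=\beta=0$). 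Thus $w\in\bigcap_{\xi\in\R^2\setminus\{0\}}\mathbb{A}[\xi](V)\setminus\{0\}$, contradicting cancellation.

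\emph{Main obstacle.} The cleanest step is the ellipticity implication. The heart of the cancellation part is the interplay between Zariski density of $\R^n\setminus\{0\}$ in $\C^n$ (which upgrades real image membership to a complex one) and Hartogs's extension theorem (whose failure in dimension one is consistent with $\C$-ellipticity not enforcing cancellation there). The converse relies on the strictly two-dimensional linear-algebraic observation that any pair of non-collinear real vectors in $\R^2$ is a basis, together with the first-order hypothesis making $\xi\mapsto\mathbb{A}[\xi]$ linear so that the explicit preimage formula solves the system on the nose; both features are essential and fail for $n\ge 3$ or $\ell\ge 2$, explaining why the converse does not extend.
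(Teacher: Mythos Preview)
The paper does not give its own proof of this lemma; it is stated as a cited result from \cite{GmRa1,GmRaVa1} and used as a black box. Your argument is correct and self-contained. The forward direction via Zariski density of $\R^n\setminus\{0\}$ in $\C^n$ (to upgrade $w\in\mathbb{A}[\xi](V)$ from real to complex $\xi$) followed by Hartogs extension of the unique holomorphic preimage is a clean route to cancellation, and your observation that Hartogs requires $n\ge 2$ matches the fact that no nonzero one-variable operator is cancelling. For the $n=2$ converse, the computation
\[
\mathbb{A}[sa+tb](s\alpha+t\beta)=s^2\mathbb{A}[a]\alpha+st\big(\mathbb{A}[a]\beta+\mathbb{A}[b]\alpha\big)+t^2\mathbb{A}[b]\beta=(s^2+t^2)w
\]
checks out, and $\R$-ellipticity does force $w\neq 0$ as you argue. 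Both parts are in the spirit of the arguments in the cited references.
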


Furthermore, the differential operator $\mathbb{A}$ of the form \eqref{eq:diffoperator} is called \emph{cocancelling} if
\begin{equation}
 \bigcap_{\xi\in\R^n\backslash\{0\}}\ker\mathbb{A}[\xi]=\{0\}.
\end{equation}
The classical example of a cocancelling operator is the divergence, see \cite{VS}. It is precisely the cocancelling operators, which appear in strong \textsc{Bourgain}-\textsc{Brezis}-type estimates:

\begin{lemma}[{\cite{BB}, {\cite[Thm.\@ 9.2]{VS}}}]\label{lem:strongBBcocancelling}
Let $n\ge2$ and $\mathbb{A}$ be an $\ell$-th order, homogeneous, linear, constant coefficient, cocancelling differential operator on $\R^n$ from $V$ to $W$. Then there exists a constant $c=c(\mathbb{A})>0$ such that we have 
\begin{equation}\label{eq:strongBBcocancelling}
 \norm{f}_{\dot{\sobo}{}^{-1,1^*}(\R^n)}\le c\left(\norm{\mathbb{A}f}_{\dot{\sobo}{}^{-1-\ell,1^*}(\R^n)}+\norm{f}_{\lebe^1(\R^n)}\right)\qquad\text{for all}\;f\in\hold_{c}^{\infty}(\R^{n};V).
\end{equation}
\end{lemma}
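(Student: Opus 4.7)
I would argue by duality, since $\dot{\sobo}{}^{-1,1^*}(\R^n;V)$ is the topological dual of $\dot{\sobo}{}^{1,n}(\R^n;V)$. The estimate \eqref{eq:strongBBcocancelling} is then equivalent to the bound
\begin{align*}
\Bigl|\int_{\R^n}\skalarProd{f}{\varphi}\Bigr|\leq c\bigl(\norm{\mathbb{A}f}_{\dot{\sobo}{}^{-1-\ell,1^*}(\R^n)}+\norm{f}_{\lebe^1(\R^n)}\bigr)\norm{\varphi}_{\dot{\sobo}{}^{1,n}(\R^n)},\qquad\varphi\in\hold_c^\infty(\R^n;V).
\end{align*}
A naive attempt is to split $\varphi = \varphi^\flat + \mathbb{A}^*\psi$, so that the pairing with $f$ yields $\lebe^1$--$\lebe^\infty$ duality on the first piece and, by integration by parts, a duality pairing between $\mathbb{A}f$ and $\psi$ (with $\dot{\sobo}{}^{-1-\ell,1^*}$ dual to $\dot{\sobo}{}^{\ell+1,n}$) on the second. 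This splitting, however, cannot be constructed pointwise in frequency since $\mathbb{A}^*[\xi]\colon W\to V$ need not be surjective for any individual $\xi$.

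The correct construction rests on the key algebraic consequence of cocancellation: $\bigcap_{\xi\neq 0}\ker\mathbb{A}[\xi]=\{0\}$ is equivalent to $\sum_{\xi\neq 0}\mathbb{A}^*[\xi](W) = V$, so that, by compactness of $\mathbb{S}^{n-1}$, there exist finitely many directions $\xi_1,\ldots,\xi_m\in\mathbb{S}^{n-1}$ and linear maps $L_j\colon V\to W$ with $\mathrm{Id}_V = \sum_{j=1}^m \mathbb{A}^*[\xi_j]L_j$. Combining this identity with a smooth partition of unity on the frequency sphere feeds into a Littlewood--Paley-type decomposition of $\varphi$, in which each frequency block is locally inverted through $\mathbb{A}^*$ in a fixed \emph{favourable} direction $\xi_j$. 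The \textsc{Bourgain}-\textsc{Brezis} approximation lemma then handles the error terms: for any $\varphi\in\dot{\sobo}{}^{1,n}(\R^n;V)$ and $\varepsilon>0$ one can write $\varphi=\varphi_1+\varphi_2$ with $\norm{\varphi_1}_{\lebe^\infty}\leq c_\varepsilon\norm{\varphi}_{\dot{\sobo}{}^{1,n}}$ and $\norm{\nabla\varphi_2}_{\lebe^n}\leq\varepsilon\norm{\nabla\varphi}_{\lebe^n}$. Iterating this geometrically and combining with the weak form of the Bourgain--Brezis estimate for fields in $\ker\mathbb{A}\cap\lebe^1(\R^n;V)$---that is, the special case $\mathbb{A}f=0$, which is the core result of \cite{BB,VS}---produces the desired splitting and closes the duality argument.

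The main obstacle is precisely the approximation step at the critical Sobolev endpoint $p=n$. Since $\dot{\sobo}{}^{1,n}(\R^n)$ only embeds into $\mathrm{BMO}(\R^n)$ and not into $\lebe^\infty(\R^n)$, no standard Calder\'on--Zygmund inversion or truncation supplies the $\lebe^\infty$-bounded correction $\varphi_1$; the \textsc{Bourgain}-\textsc{Brezis} construction, which relies on a delicate Littlewood--Paley analysis exploiting the integrability $|\nabla\varphi|^n\in\lebe^1$, is indispensable. Once this is available, the remaining ingredients---the algebraic consequence of cocancellation, the Mihlin multiplier theorem (applicable on $\dot{\sobo}{}^{s,p}$ in the range $1<p<\infty$) to invert $\mathbb{A}^*$ in each frequency sector, and the absorption of iteration errors---proceed along reasonably standard lines.
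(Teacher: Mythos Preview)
The paper does not prove this lemma at all: it is stated with a direct citation to \cite{BB} and \cite[Thm.\@ 9.2]{VS} and used as a black box in the proof of Theorem~\ref{thm:L1KMS}. There is thus no ``paper's own proof'' to compare against.

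That said, your sketch is a faithful outline of the argument behind \cite[Thm.\@ 9.2]{VS}. The duality reduction to test functions $\varphi\in\dot{\sobo}{}^{1,n}$, the algebraic reformulation of cocancellation as $\sum_{j}\mathbb{A}^{*}[\xi_{j}]L_{j}=\mathrm{Id}_{V}$ for finitely many directions, and the reliance on the \textsc{Bourgain--Brezis} approximation lemma to produce an $\lebe^{\infty}$-bounded corrector at the critical exponent are exactly the ingredients of the cited proof. Your identification of the approximation step as the genuine obstacle---the failure of $\dot{\sobo}{}^{1,n}\hookrightarrow\lebe^{\infty}$---is also on point. The one place where your description is slightly impressionistic is the iteration: in \cite{VS} the geometric absorption is carried out on the side of the \emph{weak} estimate (for $f$ with $\mathbb{A}f=0$), and the strong estimate \eqref{eq:strongBBcocancelling} is then deduced by a separate Fourier-multiplier argument that reduces the general case to the constrained one modulo terms controllable by $\norm{\mathbb{A}f}_{\dot{\sobo}{}^{-1-\ell,1^{*}}}$; but this is a matter of packaging rather than substance.
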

\section{Limiting $\lebe^{1}$-KMS inequalities}\label{sec:main}
\subsection{The main result and its proof}
We now come to our main result. For the following, let $V,\widetilde V,W$ be finite dimensional real inner product spaces. We then have 
\begin{theorem}[Limiting $\lebe^1$-KMS-inequalities]\label{thm:L1KMS} Let $n\ge2$ and $k\in\mathbb{N}$. Moreover, let  $\mathscr{A}:V\to\widetilde V$ be a linear  map and $\mathbb{B}$ be a  $k$-th order, linear, homogeneous, constant coefficient differential operator on $\R^n$ from $V$ to $W$. Then the following are equivalent:
 \begin{enumerate}
  \item\label{item:L1KMSA1} There exists a constant $c=c(\mathscr{A},\mathbb{B})>0$ such that the inequality
   \begin{equation} \label{eq:thm:L1KMS1}
      \norm{P}_{{\dot{\sobo}}{^{k-1,1^*}}(\R^n)}\leq c\,\left(\norm{\mathscr{A}[P]}_{{\dot{\sobo}}{^{k-1,1^*}}(\R^n)}+\norm{\mathbb{B} P}_{\lebe^1(\R^{n})}\right) 
   \end{equation}
   holds for all $P\in\hold^\infty_c(\R^n;V)$.
   \item\label{item:L1KMSA2} $\mathbb{B}$ is \emph{reduced elliptic} and \emph{reduced cancelling} (relative to $\mathscr{A}$), meaning that 
 \begin{align}\label{eq:ellipticitycancellingreduced}
   \bigcup_{\xi\in\R^{n}\setminus\{0\}}\ker\mathscr{A}\cap \ker\mathbb{B}[\xi] =\{0\} \quad\text{and}\quad
   \bigcap_{\xi\in\R^{n}\setminus\{0\}}\mathbb{B}[\xi](\ker\mathscr{A})=\{0\}.
\end{align}
\end{enumerate}
\end{theorem}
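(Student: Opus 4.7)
My plan is to treat both implications through an algebraic projection onto $\ker\mathscr{A}$ and a Fourier-side reduction of the target estimate to the two available $\lebe^1$-type tools: the Van Schaftingen characterisation of elliptic cancelling operators recalled at the start of Section~\ref{sec:definitions} and the strong Bourgain--Brezis estimate of Lemma~\ref{lem:strongBBcocancelling}. The forward implication (a)$\Rightarrow$(b) I dispatch quickly: restricting \eqref{eq:thm:L1KMS1} to test fields $P\in\hold^{\infty}_{c}(\R^n;\ker\mathscr{A})$ kills the $\mathscr{A}[P]$-term, so the inequality collapses to the limiting Sobolev estimate $\norm{P}_{\dot{\sobo}^{k-1,1^*}(\R^n)}\le c\norm{\mathbb{B}P}_{\lebe^1(\R^n)}$ for $\mathbb{B}$ regarded as an operator on $\ker\mathscr{A}$-valued fields; the Van Schaftingen characterisation then forces this restricted operator to be both elliptic and cancelling, which is precisely \eqref{eq:ellipticitycancellingreduced}.

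For the substantive direction (b)$\Rightarrow$(a) I decompose $P=\pi P+\pi^{\perp}P$ using the orthogonal projection $\pi$ onto $\ker\mathscr{A}$. The complementary component $\pi^{\perp}P$ is controlled pointwise by $\mathscr{A}[P]$ thanks to the injectivity of $\mathscr{A}$ on $(\ker\mathscr{A})^{\perp}$ and is thus disposed of directly. The main work is on $\pi P$, for which I would first establish the auxiliary Calder\'{o}n--Zygmund-type estimate
\begin{equation*}
\norm{P}_{\dot{\sobo}^{k-1,q}(\R^n)}\le c\bigl(\norm{\mathscr{A}[P]}_{\dot{\sobo}^{k-1,q}(\R^n)}+\norm{\mathbb{B}P}_{\dot{\sobo}^{-1,q}(\R^n)}\bigr),\qquad 1<q<\infty,
\end{equation*}
valid under reduced ellipticity alone: invert $\mathbb{B}[\xi]|_{\ker\mathscr{A}}$ by a left inverse that is homogeneous of degree $-k$, so that passing from $\dot{\sobo}^{-1,q}$ to $\dot{\sobo}^{k-1,q}$ amounts to a Mihlin-type multiplier of degree $0$, and reabsorb the $k$-th order cross term $\mathbb{B}(\pi^{\perp}P)$ back into $\norm{\mathscr{A}[P]}_{\dot{\sobo}^{k-1,q}}$. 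Specialising to $q=1^{*}$ reduces the whole problem to producing the upgrade
\begin{equation*}
\norm{\mathbb{B}P}_{\dot{\sobo}^{-1,1^{*}}(\R^n)}\le c\bigl(\norm{\mathscr{A}[P]}_{\dot{\sobo}^{k-1,1^{*}}(\R^n)}+\norm{\mathbb{B}P}_{\lebe^{1}(\R^n)}\bigr).
\end{equation*}

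The heart of the proof, and the main technical obstacle, is to construct a cocancelling, homogeneous constant-coefficient differential operator $\mathbb{A}$ from $W$ to some finite-dimensional $X$ satisfying $\mathbb{A}[\xi]\mathbb{B}[\xi]v=0$ for every $\xi\in\R^n$ and every $v\in\ker\mathscr{A}$. Writing $\mathsf{L}[\xi]\coloneqq\mathbb{B}[\xi]|_{\ker\mathscr{A}}$, reduced ellipticity makes $\mathsf{L}[\xi]$ injective for $\xi\ne 0$, and the adjugate-type formula
\begin{equation*}
\mathbb{A}[\xi]\coloneqq\det\bigl(\mathsf{L}[\xi]^{*}\mathsf{L}[\xi]\bigr)\,I_{W}-\mathsf{L}[\xi]\,\mathrm{adj}\bigl(\mathsf{L}[\xi]^{*}\mathsf{L}[\xi]\bigr)\,\mathsf{L}[\xi]^{*}
\end{equation*}
is then a homogeneous polynomial in $\xi$, whose symbol satisfies the annihilation identity on $\ker\mathscr{A}$ by direct computation, and for $\xi\ne 0$ has $\ker\mathbb{A}[\xi]=\mathsf{L}[\xi](\ker\mathscr{A})=\mathbb{B}[\xi](\ker\mathscr{A})$, so reduced cancellation becomes exactly the cocancellation $\bigcap_{\xi\ne 0}\ker\mathbb{A}[\xi]=\{0\}$. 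Because $\mathbb{A}\mathbb{B}$ then vanishes on $\ker\mathscr{A}$-valued fields, it factors as $\mathbb{A}\mathbb{B}=\mathbb{C}\mathscr{A}$ for a homogeneous constant-coefficient operator $\mathbb{C}$ of order $k+\mathrm{ord}(\mathbb{A})$; applying Lemma~\ref{lem:strongBBcocancelling} to $f=\mathbb{B}P$ with this $\mathbb{A}$ and using the factorisation to estimate the term $\norm{\mathbb{A}\mathbb{B}P}_{\dot{\sobo}^{-1-\mathrm{ord}(\mathbb{A}),1^{*}}}=\norm{\mathbb{C}\mathscr{A}[P]}_{\dot{\sobo}^{-1-\mathrm{ord}(\mathbb{A}),1^{*}}}$ by $\norm{\mathscr{A}[P]}_{\dot{\sobo}^{k-1,1^{*}}}$ yields the required upgrade, and combining with the auxiliary estimate closes the proof.
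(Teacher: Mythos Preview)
Your proof is correct and follows essentially the same route as the paper: algebraic split via the projection onto $\ker\mathscr{A}$, reduced ellipticity to reduce matters to controlling $\norm{\mathbb{B}P}_{\dot{\sobo}^{-1,1^{*}}}$, construction of a cocancelling annihilator $\mathbb{A}$ with $\ker\mathbb{A}[\xi]=\mathbb{B}[\xi](\ker\mathscr{A})$, and closure via the strong Bourgain--Brezis Lemma~\ref{lem:strongBBcocancelling}. The only cosmetic differences are that the paper cites \cite[Prop.~4.2]{VS} for the existence of $\mathbb{A}$ (whose proof is precisely your adjugate formula) and writes $\mathbb{A}\mathbb{B}P=\mathbb{A}\mathbb{B}\Pi_{(\ker\mathscr{A})^{\perp}}P$ in place of your equivalent factorisation $\mathbb{A}\mathbb{B}=\mathbb{C}\mathscr{A}$.
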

Before we come to the proof of Theorem \ref{thm:L1KMS}, some comments are in order: 
\begin{remark}
 We do not impose the stronger condition that $\mathbb{B}$ is cancelling on the whole $V$, i.e., $\bigcap_{\xi\in\R^{n}\setminus\{0\}}\mathbb{B}[\xi](V)=\{0\}$. The latter implies reduced cancellation. Indeed, for all $V_1\subseteq V$ we have always
 \begin{equation*}
  \mathbb{B}[\xi](V_1)\subseteq\mathbb{B}[\xi](V).
 \end{equation*}
\end{remark}

\begin{remark}
 For $\mathbb{B}=\Curl$ and $n\ge3$ the condition \eqref{eq:ellipticitycancellingreduced}$_2$ is trivially satisfied since curl possesses the cancellation property. On the other hand, in two  dimensions the conditions \eqref{eq:ellipticitycancellingreduced} are equivalent to $\mathbb{B}$ being reduced $\C$-elliptic (relative to $\mathscr{A}$), see \cite{GmRaVa1}. Thus, for the particular choice $\mathbb{B}=\Curl$ we recover the results from the precursor \cite{GLN1}; for detailed discussion see Section \ref{sec:L1Curl}.
\end{remark}

By Lemma \ref{lem:Celliptic} we directly obtain the following consequence of Theorem \ref{thm:L1KMS}:

\begin{corollary}\label{cor:reducedCelliptic}
If a first order, linear, homogeneous differential operator $\mathbb{B}$  from $V$ to $W$ is reduced $\C$-elliptic (relative to $\mathscr{A}$), meaning that 
\begin{align*}
\mathbb{B}\colon\hold_{c}^{\infty}(\R^{n};\ker\mathscr{A})\to \hold_{c}^{\infty}(\R^{n};W)\quad\text{is $\mathbb{C}$-elliptic}, 
\end{align*}
then the estimate \eqref{eq:thm:L1KMS1} holds true.
\end{corollary}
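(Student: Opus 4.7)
The plan is to deduce the corollary directly from Theorem \ref{thm:L1KMS} by verifying that reduced $\mathbb{C}$-ellipticity implies both of the conditions in \eqref{eq:ellipticitycancellingreduced}. There is really no new analytic content beyond the theorem; the work is purely in translating the algebraic condition on the restricted operator into the two statements of reduced ellipticity and reduced cancellation.

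First I would introduce the auxiliary restricted operator $\widetilde{\mathbb{B}} \colon \hold_c^\infty(\R^n;\ker\mathscr{A}) \to \hold_c^\infty(\R^n;W)$, which is by definition the operator $\mathbb{B}$ acting on fields with values in the subspace $\ker\mathscr{A} \subseteq V$. Since $\mathbb{B}$ is first order, homogeneous, and constant-coefficient, so is $\widetilde{\mathbb{B}}$, and its symbol map at $\xi\in\R^n$ is simply the restriction $\widetilde{\mathbb{B}}[\xi] = \mathbb{B}[\xi]\big|_{\ker\mathscr{A}}$. By hypothesis $\widetilde{\mathbb{B}}$ is $\mathbb{C}$-elliptic.

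Next I would invoke Lemma \ref{lem:Celliptic} applied to $\widetilde{\mathbb{B}}$: the $\mathbb{C}$-ellipticity of $\widetilde{\mathbb{B}}$ yields both its $\mathbb{R}$-ellipticity and its cancellation. Unpacking each of these in terms of the original operator $\mathbb{B}$ gives precisely the two conditions in \eqref{eq:ellipticitycancellingreduced}. Indeed, $\mathbb{R}$-ellipticity of $\widetilde{\mathbb{B}}$ means $\ker_{\R}\widetilde{\mathbb{B}}[\xi] = \ker\mathscr{A}\cap\ker\mathbb{B}[\xi] = \{0\}$ for every $\xi\in\R^n\setminus\{0\}$, which is the reduced ellipticity in \eqref{eq:ellipticitycancellingreduced}${}_1$; and cancellation of $\widetilde{\mathbb{B}}$ means $\bigcap_{\xi\neq 0}\widetilde{\mathbb{B}}[\xi](\ker\mathscr{A}) = \bigcap_{\xi\neq 0}\mathbb{B}[\xi](\ker\mathscr{A}) = \{0\}$, which is the reduced cancellation in \eqref{eq:ellipticitycancellingreduced}${}_2$.

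With both conditions of item~\ref{item:L1KMSA2} of Theorem \ref{thm:L1KMS} verified, the implication \ref{item:L1KMSA2}$\Rightarrow$\ref{item:L1KMSA1} of that theorem yields the estimate \eqref{eq:thm:L1KMS1}, completing the argument. Since the proof is a formal consequence of Theorem \ref{thm:L1KMS} and Lemma \ref{lem:Celliptic}, there is no substantive obstacle; the only minor care required is to notice that the symbol of the restricted operator really coincides with the restriction of the symbol, so that the algebraic conditions for $\widetilde{\mathbb{B}}$ transport unchanged to the reduced conditions for $\mathbb{B}$ relative to $\mathscr{A}$.
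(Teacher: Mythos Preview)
Your proposal is correct and follows exactly the approach indicated in the paper: apply Lemma~\ref{lem:Celliptic} to the restricted operator $\widetilde{\mathbb{B}}=\mathbb{B}|_{\hold_c^\infty(\R^n;\ker\mathscr{A})}$ to obtain reduced ellipticity and reduced cancellation, and then invoke Theorem~\ref{thm:L1KMS}. The paper does not give a written proof beyond the sentence ``By Lemma~\ref{lem:Celliptic} we directly obtain the following consequence of Theorem~\ref{thm:L1KMS}'', and your argument spells out precisely that deduction.
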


\begin{proof}[Proof of Theorem \ref{thm:L1KMS}]
  We start with the direction  `\ref{item:L1KMSA2} $\Rightarrow$ \ref{item:L1KMSA1}', and hence let $P\in\hold_{c}^{\infty}(\R^{n};V)$ be given. As in \cite{GLN2}, we perform an algebraic split of the vector field $P$. More precisely, we denote by $\Pi_{\ker\mathscr{A}}$ and $\Pi_{(\ker\mathscr{A})^{\bot}}$ the orthogonal projections onto $\ker\mathscr{A}$ and $(\ker\mathscr{A})^{\perp}$, respectively. Given $\alpha\in\mathbb{N}_{0}^{n}$ with $|\alpha|=k-1$, we then write
\begin{align}\label{eq:orthdecomp}
  \partial^{\alpha}P = \Pi_{\ker\mathscr{A}}[\partial^{\alpha}P]+\Pi_{(\ker\mathscr{A})^\perp}[\partial^{\alpha}P]
\end{align}  
and treat both parts separately. Firstly, since $\mathscr{A}|_{\ker(\mathscr{A})^\perp}$ is injective, we have the pointwise estimate 
  \begin{equation}\label{eq:injectivity}
  	\begin{split}
   |\partial^{\alpha}\Pi_{(\ker\mathscr{A})^\perp}P(x)| & =|\Pi_{(\ker\mathscr{A})^\perp}\partial^{\alpha}P(x)| \\ & \le c\,|\mathscr{A}[\Pi_{(\ker\mathscr{A})^\perp}\partial^{\alpha}P(x)]|= c\,|\mathscr{A}[\partial^{\alpha}P(x)]|
   \end{split}
  \end{equation}
for all $x\in\R^{n}$, where $c=c(\mathscr{A})>0$ is a constant. Secondly, the reduced ellipticity from  $\eqref{eq:ellipticitycancellingreduced}_{1}$ yields that  $\mathbb{B}:\hold^\infty_c(\R^n;\ker\mathscr{A})\to \hold^\infty_c(\R^n;W)$ is elliptic, and this implies that there exists a constant $c>0$ such that 
\begin{align}\label{eq:eldorado}
\|\widetilde{P}\|_{\lebe^{1^{*}}(\R^{n})}\leq c\,\|\mathbb{B}\widetilde{P}\|_{{\dot\sobo}{^{-k,1^{*}}}(\R^{n})}\qquad\text{for all}\;\widetilde{P}\in\hold_{c}^{\infty}(\R^{n};\ker\mathscr{A}). 
\end{align}
Now let $\alpha\in\mathbb{N}_{0}^{n}$ be such that $|\alpha|=k-1$. We then record that 
\begin{align}\label{eq:littlehelper}
\norm{\partial^{\alpha}\Pi_{\ker\mathscr{A}}[P]}_{\lebe^{1^{*}}(\R^{n})} & = \norm{\Pi_{\ker\mathscr{A}}[\partial^{\alpha}P]}_{\lebe^{1^{*}}(\R^{n})} \notag\\ 
& \!\!\stackrel{\eqref{eq:eldorado}}{\leq} c\,\norm{\mathbb{B}(\Pi_{\ker\mathscr{A}}[\partial^{\alpha}P])}_{{\dot{\sobo}}{^{-k,1^{*}}}(\R^{n})} \notag\\ 
& \!\!\stackrel{\eqref{eq:orthdecomp}}{=} c\,\norm{\mathbb{B}(\partial^{\alpha}P-\Pi_{(\ker\mathscr{A})^{\bot}}[\partial^{\alpha}P])}_{{\dot{\sobo}}{^{-k,1^{*}}}(\R^{n})} \notag\\ 
& \leq c\,\norm{\partial^{\alpha}\mathbb{B}P}_{{\dot{\sobo}}{^{-k,1^{*}}}(\R^{n})} + c\,\norm{\mathbb{B}\Pi_{(\ker\mathscr{A})^{\bot}}[\partial^{\alpha}P]}_{{\dot{\sobo}}{^{-k,1^{*}}}(\R^{n})}\notag\\ 
& \leq  c\,\norm{\mathbb{B}P}_{{\dot{\sobo}}{^{-1,1^{*}}}(\R^{n})} + c\,\norm{\mathbb{B}\Pi_{(\ker\mathscr{A})^{\bot}}[\partial^{\alpha}P]}_{{\dot{\sobo}}{^{-k,1^{*}}}(\R^{n})}\notag\\ 
& \leq  c\,\norm{\mathbb{B}P}_{{\dot{\sobo}}{^{-1,1^{*}}}(\R^{n})} + c\,\norm{\Pi_{(\ker\mathscr{A})^{\bot}}[\partial^{\alpha}P]}_{\lebe^{1^{*}}(\R^{n})}\notag\\ 
& \!\stackrel{\eqref{eq:injectivity}}{\leq}  c\,\norm{\mathbb{B}P}_{{\dot{\sobo}}{^{-1,1^{*}}}(\R^{n})} + c\,\norm{\mathscr{A}[\partial^{\alpha}P]}_{\lebe^{1^{*}}(\R^{n})}\notag\\ 
& \leq c\,\norm{\mathbb{B}P}_{{\dot{\sobo}}{^{-1,1^{*}}}(\R^{n})} + c\,\norm{\mathscr{A}[P]}_{{\dot{\sobo}}{^{k-1,1^{*}}}(\R^{n})}.
\end{align}
We recall from $\eqref{eq:ellipticitycancellingreduced}_{1}$ that $\mathbb{B}:\hold^\infty_c(\R^n;\ker\mathscr{A})\to \hold^\infty_c(\R^n;W)$ is elliptic. Hence, by \cite[Prop.\@ 4.2]{VS} there exists $\ell\in\mathbb{N}$ and a finite-dimensional vector space $F$ and an $\ell$-th order, homogeneous, linear, constant coefficient differential operator $\mathbb{L}:\hold^\infty_c(\R^n;W)\to \hold^\infty_c(\R^n;F)$ such that
  \begin{subequations}
 \begin{equation}\label{eq:kernel}
  \ker(\mathbb{L}[\xi])=\mathbb{B}[\xi](\ker\mathscr{A}) \qquad \text{for all $\xi\in\R^n\backslash\{0\}$}.
 \end{equation}
In particular, this implies that 
\begin{equation}\label{eq:cocancelling}
 \mathbb{L}\mathbb{B}\Pi_{\ker\mathscr{A}}P=0.
\end{equation}
\end{subequations}
Moreover, by our assumption \eqref{eq:ellipticitycancellingreduced} of $\mathbb{B}$ being reduced cancelling relative to $\mathscr{A}$, we have that $\mathbb{B}:\hold^\infty_c(\R^n;\ker\mathscr{A})\to \hold^\infty_c(\R^n;W)$ is cancelling, so 
\begin{equation}
 \bigcap_{\xi\in\R^n\backslash\{0\}}\mathbb{B}[\xi](\ker\mathscr{A})=\{0\}.
\end{equation}
Recalling \eqref{eq:kernel}, we thus obtain
\begin{equation}
 \bigcap_{\xi\in\R^n\backslash\{0\}}\ker(\mathbb{L}[\xi])=\{0\},
\end{equation}
meaning that $\mathbb{L}:\hold^\infty_c(\R^n;W)\to \hold^\infty_c(\R^n;F)$ is cocancelling. For $P\in\hold^\infty_c(\R^n;V)$ we have $\mathbb{B} P\in\hold^\infty_c(\R^n;W)$, and an application of the cocancelling operator $\mathbb{L}$ gives us:
\begin{align}
 \norm{\mathbb{B}P}_{\dot{\sobo}{}^{-1,1^*}(\R^n)}\quad &\overset{\mathclap{\text{Lem. \ref{lem:strongBBcocancelling}}}}{\le} \quad c\left(\norm{\mathbb{L}\mathbb{B}P}_{\dot{\sobo}{}^{-1-\ell,1^*}(\R^n)}+\norm{\mathbb{B}P}_{\lebe^1(\R^n)}\right) \notag \\
 & \!\!\overset{\eqref{eq:cocancelling}}{=} c\left( \norm{\mathbb{L}\mathbb{B}\Pi_{(\ker \mathscr{A})^\perp}P}_{\dot{\sobo}{}^{-1-\ell,1^*}(\R^n)}+\norm{\mathbb{B}P}_{\lebe^1(\R^n)}\right) \notag \\
 &\le \quad c\left(\norm{\mathbb{B}\Pi_{(\ker \mathscr{A})^\perp}P}_{\dot{\sobo}{}^{-1,1^*}(\R^n)}+\norm{\mathbb{B}P}_{\lebe^1(\R^n)}\right) \notag \\
 &\overset{\mathclap{\text{$\mathbb{B}$ $k$-th ord.}}}{\le} \quad c\left( \norm{\Pi_{(\ker \mathscr{A})^\perp}P}_{{\dot{\sobo}}{^{k-1,1^*}}(\R^n)}+\norm{\mathbb{B}P}_{\lebe^1(\R^n)}\right)\notag\\
 &\!\!\overset{\eqref{eq:injectivity}}{\le} \quad c\left(\norm{\mathscr{A}[P]}_{{\dot{\sobo}}{^{k-1,1^*}}(\R^n)}+\norm{\mathbb{B}P}_{\lebe^1(\R^n)}\right). \label{eq:p=1}
\end{align}
Gathering estimates, we finally arrive at 
\begin{align*}
\norm{P}_{{\dot{\sobo}}{^{k-1,1^{*}}}(\R^{n})} & \leq  \norm{\Pi_{\ker\mathscr{A}}[P]}_{{\dot{\sobo}}{^{k-1,1^{*}}}(\R^{n})} + \norm{\Pi_{(\ker\mathscr{A})^{\bot}}[P]}_{{\dot{\sobo}}{^{k-1,1^{*}}}(\R^{n})}\\ 
& \!\stackrel{\eqref{eq:injectivity}}{\leq} \norm{\Pi_{\ker\mathscr{A}}[P]}_{{\dot{\sobo}}{^{k-1,1^{*}}}(\R^{n})} + c\,\norm{\mathscr{A}[P]}_{{\dot{\sobo}}{^{k-1,1^{*}}}(\R^{n})}\\ 
& \!\!\!\!\!\!\stackrel{\eqref{eq:littlehelper},\,\eqref{eq:p=1}}{\leq}  c\,\left(\norm{\mathscr{A}[P]}_{{\dot{\sobo}}{^{k-1,1^*}}(\R^n)}+\norm{\mathbb{B}P}_{\lebe^1(\R^n)}\right).
\end{align*}
This gives us the desired estimate \eqref{eq:thm:L1KMS1}. 

For the converse implication  `\ref{item:L1KMSA1} $\Rightarrow$ \ref{item:L1KMSA2}' we have to show that the restricted operator $\mathbb{B}:\hold^\infty_c(\R^n;\ker\mathscr{A})\to \hold^\infty_c(\R^n;W)$ is elliptic and cancelling. An application of \eqref{eq:thm:L1KMS1} to maps $P\in \hold^\infty_c(\R^n;\ker\mathscr{A})$ gives us
\begin{equation}
 \norm{P}_{{\dot{\sobo}{}^{k-1,1^*}}(\R^n)}\leq c\,\norm{\mathbb{B} P}_{\lebe^1(\R^{n})},
\end{equation}
so that the requisite ellipticity and cancellation, and thus \ref{item:L1KMSA2}, follow from \cite[Thm.\@ 1.3]{VS}. This completes the proof.
\end{proof}
We briefly pause to comment on the particular strategy in the above proof:
\begin{remark}\label{rem:mainpoints}
Note that it is the estimate \eqref{eq:p=1} where we need the additional condition of reduced cancellation in the borderline case $p=1$, since for $1<p<n$ we have  $\norm{\mathbb{B}P}_{\dot{\sobo}{}^{-1,p^*}(\R^n)}\le c  \norm{\mathbb{B}P}_{\lebe^p(\R^n)}$ by the classical Sobolev embedding. Moreover, if we assumed $\mathbb{B}$ to be {elliptic and} cancelling (and not only relative to $\mathscr{A}$ in the sense of \eqref{eq:ellipticitycancellingreduced}), then estimate \eqref{eq:p=1} would immediately simplify to $\norm{\mathbb{B}P}_{{\dot{\sobo}}{^{-1,1^{*}}}(\R^{n})}\leq c\norm{\mathbb{B}P}_{\lebe^{1}(\R^{n})}$. 

In the sole presence of \eqref{eq:ellipticitycancellingreduced}, a suitably modified  argument still does not seem good enough to avoid strong Bourgain-Brezis estimates: If one replaces the first estimate in \eqref{eq:p=1} for a first order differential operator $\mathbb{B}$ by 
\begin{align}\label{eq:BBexplain}
\norm{\mathbb{B}P}_{{\dot{\sobo}}{}^{-1,1^{*}}(\R^{n})} \leq \norm{\mathbb{B}\Pi_{(\ker\mathscr{A})^{\bot}}P}_{{\dot{\sobo}}{}^{-1,1^{*}}(\R^{n})}+\norm{\mathbb{B}\Pi_{\ker\mathscr{A}}P}_{{\dot{\sobo}}{}^{-1,1^{*}}(\R^{n})}, 
\end{align} 
then the first term on the right-hand side can be bounded against $c\norm{\mathscr{A}[P]}_{\lebe^{1^{*}}(\R^{n})}$. If we then directly work from the Sobolev estimate \eqref{eq:Sobolev}, the second term on the right-hand side of \eqref{eq:BBexplain} is bounded by $c\norm{\mathbb{B}\Pi_{\ker\mathscr{A}}P}_{\lebe^{1}(\R^{n})}$. In order to re-introduce $\norm{\mathbb{B}P}_{\lebe^{1}(\R^{n})}$ in view of \eqref{eq:thm:L1KMS1}, we have to control  $\|\mathbb{B}\Pi_{(\ker\mathscr{A})^{\bot}}P\|_{\lebe^{1}(\R^{n})}$ in terms of $\|\mathscr{A}[P]\|_{\lebe^{1^{*}}(\R^{n})}$ and $\norm{\mathbb{B}P}_{\lebe^{1}(\R^{n})}$ exclusively. This however seems difficult, if not impossible, as the full $\lebe^{1}$-norm of $\mathbb{B}\Pi_{(\ker\mathscr{A})^{\bot}}P$ does not give us enough flexibility to get back to the $\lebe^{1^{*}}$-norm of $\mathscr{A}[P]$. Based on strong Bourgain-Brezis estimates though, estimate \eqref{eq:p=1} shows that the critical lower order terms \emph{can} be handled when being measured in negative Sobolev norms. In this sense, the use of strong Bourgain-Brezis estimates gives us the requisite flexibility to re-introduce the lower order term $\mathscr{A}[P]$.
\end{remark}
As a direct consequence of Theorem \ref{thm:L1KMS} and its proof, one obtains the following variant for \emph{partially cancelling} operators in the sense of \cite[\S 7.1]{VS}:
\begin{corollary}
 Under the conditions of Theorem \ref{thm:L1KMS} let $\mathbb{B}$ be reduced elliptic relative to $\mathscr{A}$ and $\mathcal{T}:W\to\widetilde{W}$ be linear. Then \eqref{eq:thm:L1KMS1} holds true for all $P\in \hold^\infty_c(\R^n;V)$ such that $\mathcal{T}(\mathbb{B}P)=0$ if and only if
 \begin{equation}
  \bigcap_{\xi\in\R^{n}\setminus\{0\}}\mathbb{B}[\xi](\ker\mathscr{A})\cap\ker\mathcal{T}=\{0\}.
 \end{equation}
\end{corollary}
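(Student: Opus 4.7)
\emph{Plan.} The strategy is to re-run the proof of Theorem \ref{thm:L1KMS} with the auxiliary cocancelling operator $\mathbb{L}$ replaced by an augmentation that encodes the side constraint $\mathcal{T}\mathbb{B}P=0$; the reverse implication is extracted by specialising the inequality to $\ker\mathscr{A}$-valued test fields.

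\emph{Sufficiency.} The orthogonal split \eqref{eq:orthdecomp}, the pointwise bound \eqref{eq:injectivity} for $\Pi_{(\ker\mathscr{A})^{\perp}}P$, and the chain \eqref{eq:littlehelper} for $\Pi_{\ker\mathscr{A}}P$ only use the reduced ellipticity provided in the hypothesis, so they carry over verbatim. What is left is an analogue of \eqref{eq:p=1} valid for $P$ satisfying $\mathcal{T}\mathbb{B}P=0$. To produce it I apply \cite[Prop.\@ 4.2]{VS} to obtain an $\ell$-th order, homogeneous, linear, constant coefficient operator $\mathbb{L}\colon \hold^{\infty}_c(\R^n;W)\to \hold^{\infty}_c(\R^n;F)$ with $\ker\mathbb{L}[\xi]=\mathbb{B}[\xi](\ker\mathscr{A})$ and augment it to
\begin{equation*}
\widetilde{\mathbb{L}}f \coloneqq \bigl(\mathbb{L}f,\,(\partial^{\alpha}\mathcal{T}f)_{|\alpha|=\ell}\bigr),
\end{equation*}
which is again an $\ell$-th order, homogeneous, linear, constant coefficient differential operator. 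For every $\xi\in\R^{n}\setminus\{0\}$, at least one $\xi^{\alpha}$ with $|\alpha|=\ell$ is non-zero, and thus
\begin{equation*}
\ker\widetilde{\mathbb{L}}[\xi]\;=\;\ker\mathbb{L}[\xi]\cap\ker\mathcal{T}\;=\;\mathbb{B}[\xi](\ker\mathscr{A})\cap\ker\mathcal{T}.
\end{equation*}
The algebraic condition of the corollary therefore amounts to the cocancellation of $\widetilde{\mathbb{L}}$. For $P\in\hold^{\infty}_c(\R^n;V)$ with $\mathcal{T}\mathbb{B}P=0$, the second block of $\widetilde{\mathbb{L}}\mathbb{B}P$ vanishes identically, and the first block reduces via \eqref{eq:cocancelling} to $\mathbb{L}\mathbb{B}\Pi_{(\ker\mathscr{A})^{\perp}}P$. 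Feeding $f=\mathbb{B}P$ and $\widetilde{\mathbb{L}}$ into Lemma \ref{lem:strongBBcocancelling} and repeating the sequence of estimates from \eqref{eq:p=1} yields
\begin{equation*}
\norm{\mathbb{B}P}_{\dot{\sobo}{}^{-1,1^{*}}(\R^n)}\;\leq\; c\bigl(\norm{\mathscr{A}[P]}_{{\dot{\sobo}}{^{k-1,1^{*}}}(\R^n)}+\norm{\mathbb{B}P}_{\lebe^{1}(\R^n)}\bigr),
\end{equation*}
after which combining with \eqref{eq:littlehelper} and the orthogonal decomposition closes the sufficiency exactly as in Theorem \ref{thm:L1KMS}.

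\emph{Necessity.} Restricting \eqref{eq:thm:L1KMS1} to $P\in\hold^{\infty}_{c}(\R^{n};\ker\mathscr{A})$ with $\mathcal{T}\mathbb{B}P=0$ collapses the inequality to the constrained Sobolev-type bound $\norm{P}_{{\dot{\sobo}}{^{k-1,1^{*}}}(\R^{n})}\leq c\,\norm{\mathbb{B}P}_{\lebe^{1}(\R^{n})}$. By the partially cancelling characterisation recorded in \cite[\S 7.1]{VS} this forces $\bigcap_{\xi\neq 0}\mathbb{B}[\xi](\ker\mathscr{A})\cap\ker\mathcal{T}=\{0\}$; alternatively, any $0\neq w$ in that intersection produces the usual Dirac-type counterexample in which the approximating test fields stay in $\ker\mathscr{A}$ and, crucially, still obey the constraint because $\mathcal{T}w=0$.

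\emph{Main obstacle.} The only genuinely delicate point is the construction and analysis of $\widetilde{\mathbb{L}}$: one needs to check that it is admissible for Lemma \ref{lem:strongBBcocancelling} and that its symbol kernel really evaluates to $\mathbb{B}[\xi](\ker\mathscr{A})\cap\ker\mathcal{T}$, which is where the order-$\ell$ padding by $(\partial^{\alpha}\mathcal{T})_{|\alpha|=\ell}$ (rather than the order-zero map $\mathcal{T}$ itself) plays its role. Everything else reduces either to a verbatim re-use of the proof of Theorem \ref{thm:L1KMS} or to a citation of the partially cancelling framework of \cite{VS}.
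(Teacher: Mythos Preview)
The paper does not supply a proof of this corollary; it only remarks that the statement is ``a direct consequence of Theorem~\ref{thm:L1KMS} and its proof'' together with the partially cancelling framework of \cite[\S 7.1]{VS}. Your proposal correctly supplies the details the paper leaves implicit: the augmentation $\widetilde{\mathbb{L}}f=(\mathbb{L}f,(\partial^{\alpha}\mathcal{T}f)_{|\alpha|=\ell})$ is precisely the device that makes the constraint $\mathcal{T}\mathbb{B}P=0$ interact with Lemma~\ref{lem:strongBBcocancelling}, and your computation of $\ker\widetilde{\mathbb{L}}[\xi]$ and of $\widetilde{\mathbb{L}}\mathbb{B}P$ is right. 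The necessity via restriction to $\ker\mathscr{A}$-valued fields and the appeal to \cite[\S 7.1]{VS} (or the Dirac-type obstruction) is also correct. In short, your argument is sound and matches what the paper's one-line attribution is pointing at.
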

Finally, following \cite{LMN}, Theorem \ref{thm:L1KMS} can be reformulated in generalised Sobolev spaces: 
\begin{corollary}\label{cor:affirmative}
Let $\Omega\subset\R^{n}$ be open and bounded, and suppose that $\mathscr{A}$ and $\mathbb{B}$ are as in Theorem \ref{thm:L1KMS}\ref{item:L1KMSA2}. If we define ${\sobo}{_{0}^{\mathscr{A},\mathbb{B},1^{*},1}}(\Omega)$ to be the closure of $\hold_{c}^{\infty}(\Omega;V)$ with respect to the norm 
\begin{align*}
\norm{P}_{\sobo{^{\mathscr{A},\mathbb{B},1^{*},1}}(\Omega)}:=\norm{\mathscr{A}[P]}_{{\sobo}{^{k-1,1^*}}(\Omega)}+\norm{\mathbb{B} P}_{\lebe^1(\Omega)}, 
\end{align*}
then we have that ${\sobo}{_{0}^{\mathscr{A},\mathbb{B},1^{*},1}}(\Omega)\hookrightarrow\sobo{^{k-1,1^{*}}}(\Omega;V)$.
\end{corollary}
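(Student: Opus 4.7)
My plan is to apply the full-space inequality of Theorem \ref{thm:L1KMS} to maps in $\hold_{c}^{\infty}(\Omega;V)$ trivially extended by zero to $\R^{n}$, upgrade the resulting homogeneous $\dot{\sobo}{^{k-1,1^{*}}}$-seminorm estimate to an inhomogeneous $\sobo{^{k-1,1^{*}}}$-norm estimate by exploiting the boundedness of $\Omega$, and finally pass the inequality to the density closure defining $\sobo{_{0}^{\mathscr{A},\mathbb{B},1^{*},1}}(\Omega)$ by a standard completion argument.

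Concretely, given $P \in \hold_{c}^{\infty}(\Omega;V)$, Theorem \ref{thm:L1KMS} produces
\begin{equation*}
\norm{P}_{\dot{\sobo}{^{k-1,1^{*}}}(\R^{n})} \leq c\bigl(\norm{\mathscr{A}[P]}_{\dot{\sobo}{^{k-1,1^{*}}}(\R^{n})} + \norm{\mathbb{B}P}_{\lebe^{1}(\R^{n})}\bigr).
\end{equation*}
Since $P$ is compactly supported in the bounded set $\Omega$, iterated application of the Poincar\'{e} inequality controls $\norm{\D^{j}P}_{\lebe^{1^{*}}(\Omega)}$ by $\norm{\D^{k-1}P}_{\lebe^{1^{*}}(\Omega)}$ for every $0 \leq j \leq k-1$, yielding $\norm{P}_{\sobo{^{k-1,1^{*}}}(\Omega)} \leq c(\abs{\Omega},n,k)\,\norm{P}_{\dot{\sobo}{^{k-1,1^{*}}}(\R^{n})}$. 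On the right-hand side, the homogeneous seminorm of $\mathscr{A}[P]$ is trivially dominated by its inhomogeneous counterpart, and $\norm{\mathbb{B}P}_{\lebe^{1}(\R^{n})} = \norm{\mathbb{B}P}_{\lebe^{1}(\Omega)}$. Combining these observations yields
\begin{equation*}
\norm{P}_{\sobo{^{k-1,1^{*}}}(\Omega)} \leq c\,\norm{P}_{\sobo{^{\mathscr{A},\mathbb{B},1^{*},1}}(\Omega)} \qquad \text{for all } P \in \hold_{c}^{\infty}(\Omega;V).
\end{equation*}

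To conclude, any element $P \in \sobo{_{0}^{\mathscr{A},\mathbb{B},1^{*},1}}(\Omega)$ is by construction the limit of a sequence $(P_{j}) \subset \hold_{c}^{\infty}(\Omega;V)$ that is Cauchy with respect to $\norm{\cdot}_{\sobo{^{\mathscr{A},\mathbb{B},1^{*},1}}(\Omega)}$. Applying the displayed inequality to the differences $P_{i} - P_{j}$ shows that $(P_{j})$ is also Cauchy in $\sobo{^{k-1,1^{*}}}(\Omega;V)$, and its limit $\widetilde{P}$ in that space provides the canonical realisation of $P$. Passing the inequality to the limit then delivers the desired continuous embedding $\sobo{_{0}^{\mathscr{A},\mathbb{B},1^{*},1}}(\Omega) \hookrightarrow \sobo{^{k-1,1^{*}}}(\Omega;V)$.

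The main obstacle is essentially none beyond Theorem \ref{thm:L1KMS} itself, as everything else reduces to classical tools. The only point deserving mild care is verifying consistency of the two notions of limit (in the abstract completion defining $\sobo{_{0}^{\mathscr{A},\mathbb{B},1^{*},1}}(\Omega)$ versus in $\sobo{^{k-1,1^{*}}}(\Omega;V)$), but this is the standard identification of the completion of a normed space with its image in any ambient Banach space into which it continuously embeds.
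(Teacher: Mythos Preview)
Your proof is correct and follows essentially the same approach as the paper: extend by zero, apply Theorem~\ref{thm:L1KMS}, and pass to the closure. You simply spell out two details that the paper leaves implicit --- the Poincar\'{e} step that upgrades the homogeneous $\dot{\sobo}{^{k-1,1^{*}}}$-seminorm on the left to the inhomogeneous $\sobo{^{k-1,1^{*}}}$-norm (which is indeed needed, since the corollary is stated for inhomogeneous norms while Theorem~\ref{thm:L1KMS} is homogeneous), and the routine Cauchy-sequence argument for passing the inequality to the completion.
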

\begin{proof}
Extending $P\in\hold_{c}^{\infty}(\Omega;V)$ by zero to the entire $\R^{n}$, Theorem \ref{thm:L1KMS} gives us the requisite inequality for the extended maps. The claimed embedding then directly follows from the definition of ${\sobo}{_{0}^{\mathscr{A},\mathbb{B},1^{*},1}}(\Omega)$ as a closure.
\end{proof}

\subsection{Other space scales and connections to strong Bourgain-Brezis-type estimates}

As discussed in Remark \ref{rem:mainpoints}, the strong Bourgain-Brezis estimates are crucial in the proof of Theorem \ref{thm:L1KMS}. If we aim for KMS-type inequalities involving different space scales, so e.g. fractional Sobolev spaces, it is not known at present whether such strong Bourgain-Brezis estimates hold in the requisite form; also see the discussion in \cite[\S 9]{VS}. Here one has the following result, which works subject to the (full) cancellation assumption on $\mathbb{B}$: 
\begin{corollary}\label{cor:KMS}
In the situation of Theorem \ref{thm:L1KMS}, let $\mathbb{B}$ be a constant rank operator which is reduced elliptic relative to $\mathscr{A}$  and \emph{(fully) cancelling}. In particular, $\mathbb{B}$ satisfies $\bigcap_{\xi\neq 0}\mathbb{B}[\xi](V)=\{0\}$. Let $0<s<1$ and $1<p<\infty$ be such that $\frac{1}{p}-\frac{s}{n}=1-\frac{1}{n}$. Then for any $1<q<\infty$ there exists a constant $c=c(\mathscr{A},\mathbb{B},s,q)>0$ such that we have 
\begin{align}\label{eq:fractional}
&\norm{\D^{k-1}P}_{{{\dot{\besov}}{_{p,q}^{s}}(\R^{n})}}\leq c\Big(\norm{\D^{k-1}\mathscr{A}[P]}_{{\dot{\besov}}{_{p,q}^{s}}(\R^{n})} + \norm{\mathbb{B}P}_{\lebe^{1}(\R^{n})} \Big),\\ \label{eq:fractional2}
&\norm{\D^{k-1}P}_{{{\dot{\mathrm{F}}}{_{p,q}^{s}}(\R^{n})}}\leq c\Big(\norm{\D^{k-1}\mathscr{A}[P]}_{{\dot{\mathrm{F}}}{_{p,q}^{s}}(\R^{n})} + \norm{\mathbb{B}P}_{\lebe^{1}(\R^{n})} \Big),
\end{align}
for all $P\in\hold_{c}^{\infty}(\R^{n};V)$. 
\end{corollary}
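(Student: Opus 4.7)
The plan is to follow the blueprint of the proof of Theorem \ref{thm:L1KMS}, but to replace the strong Bourgain--Brezis step -- which forced the $\dot{\sobo}{^{-1,1^{*}}}$-norm in \eqref{eq:p=1} and is not presently known in the Besov/Triebel--Lizorkin scale -- by a single Van Schaftingen-type embedding that exploits the stronger hypothesis of \emph{full} cancellation of $\mathbb{B}$.

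As a preliminary step I would once more perform the algebraic split $P=P_{1}+P_{2}$ with $P_{1}\coloneqq\Pi_{\ker\mathscr{A}}[P]$ and $P_{2}\coloneqq\Pi_{(\ker\mathscr{A})^{\perp}}[P]$. Since $\Pi_{(\ker\mathscr{A})^{\perp}}=(\mathscr{A}|_{(\ker\mathscr{A})^{\perp}})^{-1}\circ\mathscr{A}$ is a bounded linear endomorphism of $V$ that commutes with derivatives, it acts as a bounded multiplier on every homogeneous Besov and Triebel--Lizorkin space, yielding
\begin{equation*}
\norm{\D^{k-1}P_{2}}_{\dot{\besov}{_{p,q}^{s}}(\R^{n})}\le c\,\norm{\D^{k-1}\mathscr{A}[P]}_{\dot{\besov}{_{p,q}^{s}}(\R^{n})}
\end{equation*}
together with its Triebel--Lizorkin analogue. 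Hence only the in-kernel piece $\D^{k-1}P_{1}$ requires further attention.

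For $P_{1}\in\hold_{c}^{\infty}(\R^{n};\ker\mathscr{A})$ I would combine reduced ellipticity with the constant-rank assumption: the symbol $\mathbb{B}[\xi]\colon\ker\mathscr{A}\to\mathbb{B}[\xi](\ker\mathscr{A})$ is invertible for $\xi\neq0$ with a smooth, homogeneous pseudo-inverse of degree $-k$; together with the globally smooth orthogonal projection onto $\mathbb{B}[\xi](\ker\mathscr{A})$ provided by constant rank, this yields a Mikhlin--H\"{o}rmander multiplier bounded on $\dot{\besov}{_{p,q}^{s-1}}$ and $\dot{\mathrm{F}}{_{p,q}^{s-1}}$ for all $1<p<\infty$, $1<q<\infty$, and delivers the elliptic estimate
\begin{equation*}
\norm{\D^{k-1}P_{1}}_{\dot{\besov}{_{p,q}^{s}}}\le c\,\norm{\mathbb{B}P_{1}}_{\dot{\besov}{_{p,q}^{s-1}}}
\end{equation*}
(and the analogous Triebel--Lizorkin version). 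Decomposing $\mathbb{B}P_{1}=\mathbb{B}P-\mathbb{B}P_{2}$ and using that $\mathbb{B}$ itself is a degree-$k$ Fourier multiplier on these spaces gives $\norm{\mathbb{B}P_{2}}_{\dot{\besov}{_{p,q}^{s-1}}}\le c\,\norm{\D^{k-1}P_{2}}_{\dot{\besov}{_{p,q}^{s}}}$, which by the previous paragraph is already controlled by $\norm{\D^{k-1}\mathscr{A}[P]}_{\dot{\besov}{_{p,q}^{s}}}$. The whole argument thereby reduces to bounding $\norm{\mathbb{B}P}_{\dot{\besov}{_{p,q}^{s-1}}}$ by $\norm{\mathbb{B}P}_{\lebe^{1}}$.

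The principal obstacle is precisely this last step. At the critical scaling $\tfrac{1}{p}-\tfrac{s-1}{n}=1$ a generic $\lebe^{1}$-function admits only the embedding $\lebe^{1}\hookrightarrow\dot{\besov}{_{p,\infty}^{s-1}}$, so the endpoint summability $q<\infty$ is unattainable without additional structure. This is exactly where the full cancellation of $\mathbb{B}$ -- rather than merely the reduced cancellation sufficient in Theorem \ref{thm:L1KMS} -- has to be used: combined with constant rank it produces the refined Van Schaftingen-type estimate
\begin{equation*}
\norm{\mathbb{B}u}_{\dot{\besov}{_{p,q}^{s-1}}(\R^{n})}+\norm{\mathbb{B}u}_{\dot{\mathrm{F}}{_{p,q}^{s-1}}(\R^{n})}\le c\,\norm{\mathbb{B}u}_{\lebe^{1}(\R^{n})},\qquad u\in\hold_{c}^{\infty}(\R^{n};V),
\end{equation*}
the Besov/Triebel--Lizorkin counterpart of the endpoint $\lebe^{1}$-estimates underlying \cite[Thm.~1.3]{VS}. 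Applying this bound with $u=P$ and combining with the preceding $P_{1}$- and $P_{2}$-estimates closes the argument and yields \eqref{eq:fractional} and \eqref{eq:fractional2} simultaneously, since all steps go through verbatim in the Triebel--Lizorkin scale.
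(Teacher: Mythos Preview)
Your proposal is correct and follows essentially the same route as the paper: the algebraic split, the elliptic Besov/Triebel--Lizorkin estimate for the $\ker\mathscr{A}$-part reducing matters to $\norm{\mathbb{B}P}_{\dot{\besov}{_{p,q}^{s-1}}}$, and then the Van Schaftingen-type embedding $\norm{\mathbb{B}P}_{\dot{\besov}{_{p,q}^{s-1}}}\le c\norm{\mathbb{B}P}_{\lebe^{1}}$ via full cancellation plus constant rank (the paper makes this last step explicit by invoking \cite[Rem.~1]{Raita} to obtain an annihilator $\mathbb{L}$ with $\ker\mathbb{L}[\xi]=\mathbb{B}[\xi](V)$ and then \cite[Props.~8.7, 8.8]{VS}). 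One cosmetic point: for the elliptic multiplier step you do not actually need the global constant-rank hypothesis, since reduced ellipticity already forces $\dim\mathbb{B}[\xi](\ker\mathscr{A})=\dim\ker\mathscr{A}$ to be constant; the constant-rank assumption is only genuinely used in the final Van Schaftingen embedding.
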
 
\begin{proof} 
We only give the modifications in the proof of Theorem \ref{thm:L1KMS}, and focus on \eqref{eq:fractional}. We keep everything unchanged until \eqref{eq:injectivity}, but now estimate \eqref{eq:littlehelper} is replaced by 
\begin{align*}
\norm{\partial^{\alpha}\Pi_{\ker\mathscr{A}}[P]}_{{\dot{\besov}}{_{p,q}^{s}}(\R^{n})}& \leq c\big(\norm{\mathbb{B}P}_{{\dot{\besov}}{_{p,q}^{s-1}}(\R^{n})} + \norm{\D^{k-1}\mathscr{A}[P]}_{{\dot{\besov}}{_{p,q}^{s}}(\R^{n})}\big).
\end{align*}
As the reader might notice, the derivation of this estimate only requires the reduced ellipticity of $\mathbb{B}$ relative to $\mathscr{A}$. Since $\mathbb{B}$ has constant rank, \cite[Rem.\@ 1]{Raita} provides us with a linear, homogeneous, constant-coefficient differential operator $\mathbb{L}$ from $W$ to a finite dimensional euclidean space $F$ such that we have  
\begin{align}\label{eq:exactnessconstantrank}
\ker(\mathbb{L}[\xi])=\mathbb{B}[\xi](V)\qquad\text{for all}\;\xi\in\R^{n}\setminus\{0\}.
\end{align} 
By the full cancellation condition on $\mathbb{B}$, we then conclude from \eqref{eq:exactnessconstantrank} that $\mathbb{L}$ is (fully) cocancelling, so $\bigcap_{\xi\in\R^{n}\setminus\{0\}}\ker(\mathbb{L}[\xi])=\{0\}$. Therefore, \cite[Prop.\@ 8.8]{VS} gives us the estimate
\begin{align}\label{eq:cancelest}
\norm{\mathbb{B}P}_{{\dot{\besov}}{_{p,q}^{s-1}}(\R^{n})} \leq c\norm{\mathbb{B}P}_{\lebe^{1}(\R^{n})}
\end{align}
as a substitute for \eqref{eq:p=1}. To arrive at \eqref{eq:fractional}, we then may conclude the proof of \eqref{eq:fractional} as above for Theorem \ref{thm:L1KMS}. Inequality \eqref{eq:fractional2} is then established analogously, now resorting to \cite[Prop.\@ 8.7]{VS} instead of \cite[Prop.\@ 8.8]{VS}. The proof is complete.
\end{proof}
As is the case for Theorem \ref{thm:L1KMS}, Corollary \ref{cor:KMS} lets us retrieve and extend several results from \cite{GLN1,GmSp}; see Section \ref{sec:ex00} for a discussion. We now briefly compare the underlying assumptions of Theorem \ref{thm:L1KMS} and Corollary \ref{cor:KMS}:
\begin{remark}[On the constant rank hypothesis]
In Theorem \ref{thm:L1KMS}, we do \emph{not} require $\mathbb{B}$ to have  constant rank, and mere injectivity of $\mathbb{B}[\xi]\colon \ker\mathscr{A}\to W$ for all $\xi\in\R^{n}\setminus\{0\}$ is sufficient. In view of the proof, this is so because we only need the operator $\mathbb{L}$ to satisfy \eqref{eq:kernel}. Since $\mathbb{B}$ is assumed elliptic as an operator on the $\ker\mathscr{A}$-valued maps, the existence of such an operator directly comes out as a consequence of \cite[Prop.\@ 4.2]{VS}. In the framework of Corollary \ref{cor:KMS}, the present lack of the strong Bourgain-Brezis estimates for fractional norms forces us to work with fully cancelling operators. However, then we cannot resort to \cite[Prop.\@ 4.2]{VS} in order to obtain the requisite operator $\mathbb{L}$ because $\mathbb{B}$ is not necessarily elliptic on $\hold_{c}^{\infty}(\R^{n};V)$. The assumption of $\mathbb{B}$ having (global) constant rank then still lets us apply \cite[Rem.\@ 1]{Raita} and thereby conclude Corollary \ref{cor:KMS}.  In applications, however, the constant rank hypothesis is satisfied by most of the relevant operators, and specifically lets us retrieve known critical KMS-inequalities in Besov and Triebel-Lizorkin spaces, cf. \cite[\S 2.2]{GmSp}.
\end{remark}
Comparing the proofs of Theorem \ref{thm:L1KMS} and Corollary \ref{cor:KMS} then leads to the following larger picture: If a space supports a suitable version of strong Bourgain-Brezis estimates, then the corresponding limiting $\lebe^{1}$-KMS-inequalities hold subject to reduced ellipticity and cancellation. If only variants of estimates \eqref{eq:cancelest} instead of strong Bourgain-Brezis estimates are available, then constant rank, reduced ellipticity and full cancellation are required. This leads to a variety of inequalities which we do not record here explicitly; yet, it is not clear to us how to avoid strong Bourgain-Brezis estimates in order to give fully analogous proofs in both scenarios.

\section{Examples}\label{sec:examples}
In this section, we present several examples of operators which illustrate the strength of Theorem \ref{thm:L1KMS} in the physically relevant cases  $n\in\{2,3\}$. This lets us retrieve several previously established results in a unified manner, but also yields novel inequalities which could not be treated by the available methods so far. Specifically, this concerns the operators $(\dev)\sym\Curl$ from the introduction, for which we answer a borderline case having been left open by \textsc{M\"{u}ller} et al. \cite[Rem.\@ 3.6]{LMN}; cf. Section \ref{sec:ex1} below. Our general findings are concisely summarised in Figure \ref{fig:maintable}.

\subsection{Previously known KMS-inequalities as special cases}\label{sec:L1Curl}
If $V=\widetilde{V}=\R^{n\times n}$ and $\mathbb{B}=\Curl $, validity of the KMS-inequality 
\begin{align*}
\norm{P}_{\lebe^{\frac{n}{n-1}}(\R^{n})} \leq c\Big( \norm{\mathscr{A}[P]}_{\lebe^{\frac{n}{n-1}}(\R^{n})}+\norm{\Curl P}_{\lebe^{1}(\R^{n})}\Big),\qquad P\in\hold_{c}^{\infty}(\R^{n};\R^{n\times n})
\end{align*}
is known to be equivalent to (i) $\mathbb{A}u \coloneqq\mathscr{A}[\D u]$ being an elliptic operator \cite{GLN1,GmSp} if $n\geq 3$ and (ii) $\mathbb{A}u \coloneqq\mathscr{A}[\D u]$ being a $\mathbb{C}$-elliptic operator \cite{GLN1} if $n=2$. This now is a consequence of Theorem \ref{thm:L1KMS}: Since $\ker\mathbb{B}[\xi]= \R^{n}\otimes\xi$, the reduced ellipticity relative to $\mathscr{A}$ is equivalent to $\mathscr{A}[v\otimes\xi]=0$ implying $v=0$ whenever $\xi\in\R^{n}\setminus\{0\}$, and this is nothing but the ellipticity of $\mathbb{A}u\coloneqq\mathscr{A}[\D u]$. In the situation of (i), $\Curl$ is cancelling, and so this case directly follows from Theorem \ref{thm:L1KMS}. In the situation of (ii), i.e. in two dimensions, the $\Curl$ is not cancelling. However, by Lemma \ref{lem:Celliptic} we see that this first order differential operator is reduced $\mathbb{C}$-elliptic relative to $\mathscr{A}$ and as before conclude the $\mathbb{C}$-ellipticity of $\mathbb{A}u\coloneqq\mathscr{A}[\D u]$.

\subsection{Inequalities involving $(\dev)\sym\Curl$}\label{sec:ex1}
In this section, we present two important instances of operators which are of particular relevance in applications, yet could not be treated by the available methods so far. These operators are given by the symmetrised curl and the deviatoric symmetrised curl, which already have been addressed in the introduction. The requisite sharp limiting estimates, which are new in this context, are now a direct consequence of Theorem \ref{thm:L1KMS}.

In three dimensions the classical curl builds on the classical cross product, so that the matrix $\Curl$ of a $(3\times3)$-matrix field is in turn a $(3\times3)$-matrix field. From the symbolic point of view the matrix $\Curl$ is seen here as a multiplication with the following special skew-symmetric matrix from the right
\begin{align}
\label{eq_daichohweis3po0uozee3eiX}
 \Anti(-\xi)\coloneqq \begin{pmatrix} 0 & \xi_3 & -\xi_2 \\ -\xi_3 & 0 & \xi_1 \\ \xi_2 & -\xi_1 & 0 \end{pmatrix}, \quad \text{for } \xi\in\R^3.
\end{align}
Similar constructions are applicable in all dimensions $n\ge2$ but the generalised matrix $\Curl$ is in general not a square matrix field,  see e.g. \cite{Lew}.

\begin{example}[Symmetrised matrix curl]\label{ex:symcurl}
The matrix differential operator $\widehat{\mathbb{B}}=\sym\Curl$, acting row-wisely on $\R^{3\times 3}$-valued fields, is cancelling. To see this, we consider for $\xi\in\R^{3}\setminus\{0\}$ and $P\in\R^{3\times 3}$, the corresponding symbol map
\begin{equation}
\widehat{\mathbb{B}}[\xi]P = \sym(P\Anti\xi)=\frac12P\Anti\xi-\frac12(\Anti\xi)P^{\top}. 
\end{equation}
Let $\E\in\bigcap_{\xi\in\R^3\backslash\{0\}}\widehat{\mathbb{B}}[\xi](\R^{3\times3})$, so that $\E\in\R^{3\times3}_{\sym}$. For an arbitrary  $z\in\R^3\backslash\{0\}$, consider $\xi=z$. Then we have for all $P\in\R^{3\times3}$:
\begin{equation*}
 z^\top(\widehat{\mathbb{B}}[z]P)z= \frac12 z^\top P \underset{=z\times z}{\underbrace{\left(\Anti z\right) z}} -\frac12 \underset{=-(z\times z)^\top}{\underbrace{z^\top(\Anti z)}}P^\top z= 0
\end{equation*}
and, in particular, $z^\top\E z =0$ for all $z\in\R^3$. Since moreover $\E$ is symmetric, we conclude that $\E=0$, and this means that $\sym\Curl$ is cancelling. We include the latter argument for the convenience of the reader: Indeed, for all $w,z\in\R^3$ we have 
\begin{align*}
 0&=(w+z)^\top\E(w+z)= w^\top\E w + z^\top \E w + w^\top \E z + z^\top\E z \\ & = \underset{\in\R}{\underbrace{z^\top\E w}} + w^\top \E z \overset{\E\in\R^{3\times3}_{\sym}}{=} 2 w^\top\E z, 
\end{align*}
and by arbitrariness of $w$ and $z$ we deduce that $\E = 0$. 
\end{example}
 The following example also gives a positive answer to a borderline case left open in \cite{LMN}, which in fact requires the sharp conditions from Theorem \ref{thm:L1KMS}\ref{item:L1KMSA2}. 
 Different from the operator $\sym\Curl$ as considered in Example \ref{ex:symcurl}, the operator $\dev\sym\Curl$ is not cancelling. Still, it proves to be reduced cancelling with respect to certain part maps $\mathscr{A}$:
\begin{example}[Deviatoric symmetrised matrix curl]\label{ex:devsymCurl}
First we show that the differential operator $\overline{\mathbb{B}}=\dev\sym\Curl$ is \emph{not cancelling}. To this end, we consider the action of its corresponding symbol map:
 \begin{equation}
 \begin{split}
\overline{\mathbb{B}}[\xi]P &= \dev\sym(P\Anti\xi) \\ & =\tfrac12P\Anti\xi-\tfrac12(\Anti\xi)P^\top+\tfrac13\skalarProd{\skew P}{\Anti\xi}\,\bbone_3.
\end{split}
\end{equation}
We now establish that $Q\in\bigcap_{\xi\in\R^3\backslash\{0\}}\overline{\mathbb{B}}[\xi](\R^{3\times3})$ for every symmetric, trace-free matrix $Q\in\R^{3\times 3}$. First, we have in view of \eqref{eq_daichohweis3po0uozee3eiX} for every \(\xi, \zeta \in \R^3\),
\begin{equation*}
\Anti(\zeta)\Anti(\xi) = \xi \otimes \zeta - \skalarProd{\xi}{\eta}\,\bbone_3.
\end{equation*}
This is Lagrange's triple product expansion \(\xi \times (\zeta \times v) = \skalarProd{\xi}{v}\, \zeta - \skalarProd{\xi}{\zeta}\,v \). Hence we have for every \(\widetilde{Q} \in \R^{3 \times 3}\)
\begin{equation*}
  (\Anti( \widetilde{Q} \xi) - \widetilde{Q}\Anti(\xi))\Anti (\xi)
 = \xi \otimes (\widetilde{Q}\xi)   - \skalarProd{\xi}{\widetilde{Q}\,\xi}\,\bbone_3
  - \widetilde{Q} \xi \otimes \xi
  + \abs{\xi}^2 \widetilde{Q},
\end{equation*}
so that if \(\widetilde{Q}=Q\) is symmetric and trace-free, we have 
\begin{equation*}
 \dev\sym\big(\Anti (\xi) (\Anti( Q \xi) - \Anti(\xi)Q)\big) = \abs{\xi}^2 Q,
\end{equation*}
and thus \(Q \in \overline{\mathbb{B}}[\xi](\R^{3 \times 3})\).

Therefore, $\bigcap_{\xi\in\R^3\backslash\{0\}}\overline{\mathbb{B}}[\xi](\R^{3\times3})$ is non-trivial, meaning that $\dev\sym\Curl$ is \emph{not cancelling}. But, as we have seen in \cite[Sec.\@ 4.1.3]{GLN2}, the operator $\overline{\mathbb{B}}=\dev\sym\Curl$ is \emph{reduced} $\C$-elliptic relative to $\mathscr{A}=\sym$. Thus, by our Corollary \ref{cor:reducedCelliptic} it holds
\begin{equation}
 \norm{P}_{\lebe^{1^*}(\R^{3})}\leq c\,\left(\norm{\sym P}_{\lebe^{1^*}(\R^{3})}+\norm{\dev\sym\Curl P}_{\lebe^1(\R^{3})}\right) 
   \end{equation}
   for all $P\in\hold^\infty_c(\R^3;\R^{3\times3})$. Based on Corollary \ref{cor:affirmative}, we thus obtain the missing borderline case from \cite[Thm.\@ 3.5, Rem.\@ 3.6]{LMN} for globally vanishing traces:
   \begin{align*}
     \norm{P}_{\lebe^{1^*}(\Omega)}\leq c\,\left(\norm{\sym P}_{\lebe^{1^*}(\Omega)}+\norm{\dev\sym\Curl P}_{\lebe^1(\Omega)}\right) 
   \end{align*}
for all $P\in \sobo^{\,\sym,\dev\sym\Curl,1^*,1}_0(\Omega)$.
\end{example}

\subsection{A non-reduced $\mathbb{C}$-elliptic, yet reduced cancelling operator}
With $V=\widetilde V=\R^{3\times 3}$, $W=\R^2$ consider $\mathscr{A}=\dev$ and $\widetilde{\mathbb{B}}$ given by
\begin{equation}
 \widetilde{\mathbb{B}}P\coloneqq \begin{pmatrix} \mathrm{div}\Div P -\partial_3\mathrm{div} P^3 \\ \partial_3\mathrm{div}P^3 \end{pmatrix} ,
\end{equation}
where $P^3$ is either the third column or the third row of $P$. On $\ker\mathscr{A}$-valued maps this reduces to the elliptic and cancelling but not $\C$-elliptic second order differential operator $\mathbb{B}_{2,3}$ from the counterexample 3.4 in \cite{GmRa1}, and by our Theorem \ref{thm:L1KMS} we obtain:
\begin{equation}
 \norm{P}_{\dot{\sobo}{}^{1,1^*}(\R^3)}\le c\, (\norm{\dev P}_{\dot{\sobo}{}^{1,1^*}(\R^3)}+\|\widetilde{\mathbb{B}}P\|_{\lebe^1(\R^3)}).
\end{equation}
In a similar way, we can construct examples build upon the operators $\mathbb{A}_{k,n}$ from the counterexample 3.4 in \cite{GmRa1}.

\newcommand{\redCell}{\cellcolor{lightgray!20}\checkmark}

\begin{figure}\centering
 \begin{tabular}{c|c|c|c|c|c|c|c|}
\diagbox[innerwidth=1.5cm]{$\mathscr{A}$}{$\mathscr{B}$}& $\mathrm{Id}$ & $\dev$ & $\sym$ & $\dev\sym$ & $\skew+\tr$ & $\skew$ & $\tr$\\\hline
$\mathrm{Id}$ & \redCell & \redCell& \redCell & \redCell & \redCell & \redCell & \redCell \\\hline 
$\dev$ & \redCell & \redCell & $\lightning$ & $\lightning$ & \redCell & \redCell & $\lightning$ \\\hline
$\sym$ & \redCell & \redCell & \redCell & \redCell &
\diagbox[innerwidth=1.5cm,dir=SW]{\ $\lightning$}{\checkmark\ }& $\lightning$ & $\lightning$ \\\hline
$\dev\sym$ & \redCell & \redCell & $\lightning$ & $\lightning$ &
\diagbox[innerwidth=1.5cm,dir=SW]{\ $\lightning$}{\checkmark\ } & $\lightning$ & $\lightning$ \\\hline
$\skew+\tr$ &
\diagbox[dir=SW]{\ \checkmark}{\checkmark} & \diagbox[dir=SW]{\ \checkmark}{\checkmark}
& $\lightning$ & $\lightning$ & $\lightning$ & $\lightning$ & $\lightning$ \\\hline
$\skew$ & $\lightning$ & $\lightning$ & $\lightning$ & $\lightning$ & $\lightning$ & $\lightning$ & $\lightning$ \\\hline 
$\tr$ & $\lightning$ & $\lightning$ & $\lightning$ & $\lightning$ & $\lightning$ & $\lightning$ & $\lightning$ \\\hline 
\end{tabular}
\caption{Overview of what type of KMS-inequalities apply, here \begin{tabular}{|c|}\redCell\end{tabular} marks reduced $\C$-ellipticity so that all KMS-inequalities are valid with this combination of $\mathscr{A}$ and $\mathbb{B}=\mathscr{B}[\Curl]$, in a 
bisected cell the upper part refers to the validity of $\lebe^1$-KMS inequalities and the lower part to the validity of $\lebe^p$-KMS ($p>1$) inequalities, whereby $\lightning$ denotes non-validity.} 
\label{fig:maintable}
\end{figure}

\subsection{Filling up Figure \ref{fig:maintable}}

The examples section in \cite[Sec.\@ 4]{GLN2} already provides a good overview of reduced ($\C$-)ellipticity of specific constellations for $\Curl$-based operators, to complete Figure \ref{fig:maintable} we only need to address the following constellations:

For $\mathscr{A}\in\{\sym,\dev\sym\}$ and $\mathscr{B}=\skew+\tr$ recall from \cite[Sec.\@ 4]{GLN2} that $\mathscr{B}[\Curl]$ is not reduced $\C$-elliptic.  On the other hand, $\mathscr{B}[\Curl]$ is reduced $\R$-elliptic so that the $\lebe^p$-KMS inequality holds for $p>1$ with this combination. We have already seen in Example \ref{ex:noninequality} above that the corresponding inequality does not persist in the borderline case $p=1$. Let us give the algebraic argument here. To this end we show, that $\mathscr{B}[\Curl]$ is not reduced cancelling relative to $\mathscr{A}$. Indeed, the action of the corresponding symbol map on skew-symmetric matrices gives us:
\begin{align}
 \skew(\Anti a\Anti\xi)+\tr(\Anti a\Anti\xi)\cdot\bbone_3= \frac12\Anti(a\times\xi)-2\skalarProd{a}{\xi}\,\bbone_3.
\end{align}
Hence,
\begin{align}
 \bigcap_{\xi\in\R^3\backslash\{0\}}\mathscr{B}[\Curl][\xi](\R^{3\times3}_{\skew})\supseteq\R\cdot\bbone_3
\end{align}
meaning that $\mathscr{B}[\Curl]$ is not reduced cancelling relative to $\mathscr{A}\in\{\sym,\dev\sym\}$.

\subsection{Remaining cases}\label{subsec:devCurl}
For $\mathscr{A}=\skew+\tr$ and $\mathscr{B}\in\{\mathrm{Id},\dev\}$ recall again from \cite[\S 4]{GLN2} that  $\mathscr{B}[\Curl]$ is not reduced $\C$-elliptic. But $\mathscr{B}[\Curl]$ is reduced $\R$-elliptic and the $\lebe^p$-KMS inequality holds for $p>1$ with this combination. Since $\Curl$ is a cancelling operator, this inequality also persists in the borderline case $p=1$ for $\mathscr{B}=\mathrm{Id}$. We show that the differential operator $\widetilde{\mathbb{B}}=\dev\Curl$ is also cancelling. For the corresponding symbol map we obtain
\begin{equation}
\widetilde{\mathbb{B}}[\xi]P=\dev(P\Anti\xi)=P\Anti\xi+\frac13\skalarProd{\skew P}{\Anti\xi}\,\bbone_3.
\end{equation}
Let $\E\in\bigcap_{\xi\in\R^3\backslash\{0\}}\widetilde{\mathbb{B}}[\xi](\R^{3\times3})$, then $\tr\E=0$, i.e., $\E\in\mathfrak{sl}(3)$. For all $w,z\in\R^3\backslash\{0\}$ with $w\perp z$ consider $\xi=z$. Then we have for all $P\in\R^{3\times3}$:
\begin{equation*}
  w^\top(\widetilde{\mathbb{B}}[z]P)z= w^\top P \underset{=z\times z}{\underbrace{\left(\Anti z\right) z}}+\frac13\skalarProd{\skew P}{\Anti z}\skalarProd{w}{z} = 0
\end{equation*}
and in particular $w^\top\E z =0$ for all $w,z\in\R^3$ with $w\perp z$, so that $\E$ has a diagonal structure. Since moreover $\E$ is trace-free, there exist $\alpha,\beta\in\R$ such that $\E=\operatorname{diag}(\alpha,\beta,-\alpha-\beta)$. We conclude:
\begin{align*}
 0 = \begin{pmatrix} 1 & -1 & 0 \end{pmatrix} \E \begin{pmatrix} 1 \\ 1 \\ 0 \end{pmatrix} = \alpha-\beta \quad \text{and} \quad 0 = \begin{pmatrix} 0 & 1 & -1 \end{pmatrix} \E \begin{pmatrix} 0 \\ 1 \\ 1 \end{pmatrix} = \alpha+2\beta 
\end{align*}
so that $\alpha=\beta=0$, i.e., $\E=0$, meaning that $\dev\Curl$ is cancelling and by our Theorem \ref{thm:L1KMS} we obtain the optimal $\lebe^1$-KMS inequality for all $P\in\hold^\infty_c(\R^3;\R^{3\times3})$:
\begin{align}
 \norm{P}_{\lebe^{1^*}(\R^3)}\le c\left(\norm{\skew P}_{\lebe^{1^*}(\R^3)}+\norm{\tr P}_{\lebe^{1^*}(\R^3)}+\norm{\dev\Curl P}_{\lebe^1(\R^3)}\right).
\end{align}

\subsection{Inequalities involving $\dev$-$\Div$}\label{sec:ex00}
 Let us also address shortly the optimal $\dev$-$\Div$-inequalities. In \cite[Sec.\@ 4.2]{GLN2} we have already seen that in all dimensions the matrix divergence operator is reduced $\C$-elliptic relative to the deviatoric part map, thus, by our Corollary \ref{cor:reducedCelliptic} we obtain the optimal estimate
\begin{equation}
 \norm{P}_{\lebe^{1^*}(\R^{n})}\leq c\,\left(\norm{\dev P}_{\lebe^{1^*}(\R^{n})}+\norm{\Div P}_{\lebe^1(\R^{n})}\right) 
   \end{equation}
   for all $P\in\hold^\infty_c(\R^n;\R^{n\times n})$.
 
\subsection{Kr\"{o}ner's incompatibility operator $\inc$}
As an example for higher order differential operators we consider $\inc$-based differential operators of the form $\mathbb{B}=\mathscr{B}[\inc]$, where $\inc=\Curl\circ\Curl^\top$ denotes \textsc{Kr\"oner}'s incompatibility tensor. We have shown in \cite{GLN2} that the only non-trivial constellation in which this type of operators could turn out to be reduced elliptic is for choosing $\mathscr{A}=\dev$. Moreover, for $\mathscr{B}\in\{\mathrm{Id}, \dev, \sym, \dev\sym\}$ the corresponding operators are even reduced $\C$-elliptic, see \cite{GLN2}. Thus, the generalised $\lebe^1$-KMS-inequalities hold for these combinations, and, e.g., we have
\begin{equation}
 \norm{P}_{\dot{\sobo}{}^{1,1^*}(\R^3)}\le c\,(\norm{\dev P}_{\dot{\sobo}{}^{1,1^*}(\R^3)}+\norm{\dev\sym\inc P}_{\lebe^1(\R^3)}).
\end{equation}
Recall that for $\mathscr{B}\in\{\skew+\tr,\tr\}$ the corresponding operator is $\R$-elliptic, so that the optimal KMS-inequalities hold in the cases $p>1$, see \cite{GLN2}. However, these inequalities do not persist in the borderline case $p=1$. Indeed, we have 
\[\tr\inc(\zeta\cdot\bbone_3)=2\,\Delta\zeta\cdot\bbone_3\quad\text{and}\quad\skew\inc(\zeta\cdot\bbone_3)\equiv0,\]
and the required reduced cancellation property is not fulfilled since the Laplacian is not cancelling.

 {\footnotesize
\subsection*{Acknowledgment}
The authors are grateful for possible discussions during the conference \emph{Trends in Analysis 2023} which was organized by the second author at the University of Duisburg-Essen in October 2023.
\vspace{-1.7ex}
\subsection*{Conflict of interest} The authors declare that they have no conflict of interest.
\vspace{-2.5ex}
\subsection*{Data availability statement} Data sharing not applicable to this article as no datasets were generated or analysed.}

\end{document}